\newcounter{cnt}
\def\mydggeometry{\makeatletter\dg@YGRID=1\dg@XGRID=20\unitlength=0.003pt\makeatother}
\newtheorem{theorem}{Theorem}[section]
\newtheorem{mainresult}{Theorem}
\newtheorem*{theorem*}{Theorem}
\newtheorem{lemma}[theorem]{Lemma}
\newtheorem{corollary}[theorem]{Corollary}
\newtheorem{proposition}[theorem]{Proposition}
\newtheorem*{conjecture*}{Conjecture}
\theoremstyle{definition}
\newtheorem{definition}[theorem]{Definition}
\newtheorem{example}[theorem]{Example}
\newtheorem{remark}[theorem]{Remark}
\newcommand{\Z}{\mathbb{Z}}
\newcommand{\C}{\mathbb{C}}
\newcommand{\N}{\mathbb{N}}
\begin{document}
\author[Ahmad]{Ibrahim Ahmad}
\title[Flat degenerations of flag supermanifolds for basic superalgebras]{Flat degenerations of flag supermanifolds for basic Lie superalgebras}

\address{Chair of Algebra and Representation Theory, RWTH Aachen University, Pontdriesch 10-16, 52062 Aachen, Germany}
\email{ahmad@art.rwth-aachen.de}

\maketitle

\begin{abstract}
  Motivated by bases of representations compatible with the PBW filtration for basic Lie superalgebras by Kus and Fourier, we generalise the construction of degenerations of flag varieties via favourable modules to the super setup. In the classical setup, this method of degenerating flag varieties by Feigin, Fourier and, Littelmann relies on constructing bases of representations of Lie algebras such that in the coordinate ring of an embedded flag varietiy their multiplication can be identified with an affine semigroup modulo terms of higher degree. By killing off said terms of higher degree via a filtration construction, one gets a toric variety the embedded flag variety degenerates into. By adapting these techniques we provide a similar construction and discuss when one can get a degeneration into a toric supervariety, as defined by Jankowski.
\end{abstract}

\section{Introduction}
Degeneration methods have proven to be of great aid when verifying questions on specific properties of varieties. With a degeneration of a variety $X$, we mean a flat family over, say, $\mathbb{A}^{1}$ such that the general fibre over $t\neq0$ is isomorphic to our initial variety $X$. The special fibre over $t=0$ is isomorphic to a, hopefully, less complicated variety, call it $Y$. By imposing flatness, one can then verify several open properties for $X$ by studying such properties for $Y$ instead. Among these are notions like dimensions, Cohen-Macauleyness, smoothness, and normality. For a condensed list of some open properties, see \cite[Appendix E]{Görtz-Wedhorn}

Those degenerations with the special fibre being a toric variety are particularly interesting. One of the most prominent methods to systematically construct such degenerations of embedded projective varieties have been the notions of Newton-Okounkov bodies \cite{Newton-Okounkov} as well as degenerations into semitoric varieties via Seshadri Stratifications by Chiriv\`i, Fang and, Littelmann \cite{Seshadri-Stratification}.

For varieties arising from representation theory, there have also been many other methods of constructing these degenerations like \cite{Degeneration-Schubert, Degeneration-Spherical,Degeneration-Flag} by considering bases of coordinate rings with respect to an embedding that multiply in a controlled manner, some refer to as a straightening law for example in \cite{Seshadri-Stratification}. Among these methods are the notions of favourable modules by Feigin, Fourier and Littelmann \cite{Favourable} as well as their generalisation of birational sequences \cite{Birational_Sequence}. The latter is also implemented in the computer algebra system OSCAR \cite{Oscar-Monomial}.

In \cite{Kus-Fourier}, Kus and Fourier construct bases of certain representations of Lie Superalgebras and propose these could provide a similar construction in a supergeometric setting. Supergeometry can be regarded as an extension of algebraic geometry in the sense that the sheaf of commutative rings, or algebras, is being replaced by a sheaf of $\Z/2\Z$-graded algebras, referred to as superalgebras, obeying a signed version of commutativity incorporating commuting as well as anti-commuting variables. Within this context, one can also define Lie superalgebras as being $\Z/2\Z$-graded spaces with commutators satisfying signed versions of axioms for Lie algebras. These objects enjoy rising popularity due to their applications in theoretical physics but prove themselves to lie deeper in pure mathematics than initially anticipated.

In this paper, we will lift the degeneration via favourable modules of Feigin-Fourier-Littelmann \cite{Favourable} to the context of flag supermanifolds for basic Lie superalgebras. We briefly recall their result:

For a semisimple, simply connected complex algebraic group $G$ with corresponding Lie algebra $\mathfrak{
g}$, we consider a regular weight $\lambda\in P^{++}$. Then, we can embed the flag variety into the projectivized highest weight space $\mathbb{P}(V(\lambda))$ with the coordinate ring being the graded algebra
\begin{equation*}
    R(\lambda):=\bigoplus_{k\in\mathbb{N}_{0}}V(k\lambda)^*.
\end{equation*}
The multiplication is the dual of the Cartan embedding which is the following homomorphism of representations of $\mathfrak{g}$ sending the highest weight vector of $V((m+n)\lambda)$ to the tensor of the highest weight vectors of $V(m\lambda)$ and $V(n\lambda)$
\begin{align*}
    V((m+n)\lambda)&\rightarrow V(m\lambda)\otimes V(n\lambda)\\
    v_{(m+n)\lambda}&\mapsto v_{m\lambda}\otimes v_{n\lambda}
\end{align*}

Fix a triangular decomposition $\mathfrak{g}=\mathfrak{n}^{-}\oplus\mathfrak{h}\oplus\mathfrak{n}^{+}$, an ordered basis $(f_1,\dots,f_N)$ of $\mathfrak{n}^{-}$ and a monomial order on $\Z^N$. Then, Feigin, Fourier and Littelmann define a monomial $\mathbf{m}=(m_1,\dots,m_N)\in\Z^N$ to be \textit{essential} if
\begin{equation*}
    f^{\mathbf{m}}v_{\lambda}=f_1^{m_{1}}\dots f_N^{m_N}v_\lambda\not\in\mathrm{span}\langle f^{\mathbf{n}}v_\lambda\mid\mathbf{n}<\mathbf{m}\rangle
\end{equation*}
Denote the set of essential monomials for $V(\lambda)$ by $\mathrm{es}(\lambda,<)$. Then it is the case that if $\mathbf{m}\in\mathrm{es}(m\lambda,<)$ and $\mathbf{n}\in\mathrm{es}(n\lambda,<)$, then $\mathbf{m+n}\in\mathrm{es}((m+n)\lambda,<)$, i.e. they have a semigroup structure. If the semigroup
\begin{equation*}
    \Gamma(\lambda,<)=\bigcup_{k\in\mathbb{N}_0}\mathrm{es}(k\lambda,<)\times\{k\}
\end{equation*}
is generated by $\mathrm{es}(\lambda,<)\times\{1\}$, then they call the module $V(\lambda)$ \textit{favourable} and get a degeneration of $R(\lambda)$ into the semigroup algebra of $\Gamma(\lambda,<)$ corresponding to a toric variety. 
Prominent examples of bases yielding such a degeneration are those given by lattice points the FFLV polytopes for $\mathrm{SL}_{n+1}$ \cite{FFLV-A} and $\mathrm{Sp}_{2n}$ \cite{FFLV-C}.

Kus and Fourier constructed in \cite{Kus-Fourier} bases of representations of Lie superalgebras parametrised by lattice points of polytopes, similar to the FFLV polytope. In their paper, they raised the question if a degeneration construction similar to the one via favourable modules would work which is what we are going to address in this paper. 

We consider for a basic Lie superalgebra $\mathfrak{g}$ and a dominant integral weight $\lambda\in\mathfrak{h}^+$ the superalgebra
\begin{equation*}
    R(\lambda)=\bigoplus_{k\in\N_{0}}K_{\mathfrak{b}}(k\lambda)^*,
\end{equation*}
where $K_{\mathfrak{b}}(k\lambda)$ is referring to the finite-dimensional quotient of the Verma module $M_\mathfrak{b}(\lambda)$ and multiplication being again the dual of the morphism sending the highest weight vector to the tensor of highest weight vectors. 
This algebra can also be interpreted as the section ring of the flag supermanifold $\mathcal{G}/\mathcal{B}$ for an algebraic supergroup $\mathcal{G}$ and Borel subsupergroup $\mathcal{B}$ with corresponding Lie superalgebras $\mathfrak{g}$ and $\mathfrak{b}$. Namely we consider the line bundle $\mathcal{L}_{\lambda}$ for the weight $\lambda\in\mathfrak{h}^{*}$ \cite[Corollary 9]{Serg_Rep} and have
    \begin{equation*}
        R(\lambda)\cong\bigoplus_{k\in\N_{0}}\mathrm{H}^{0}(\mathcal{G}/\mathcal{B},\mathcal{L}_{k\lambda})
    \end{equation*}
We have the following result
\begin{mainresult}{(Corollary \ref{result1})}
    There exists a morphism of superschemes $\kappa:\mathrm{Spec}\mathcal{R}\rightarrow\mathbb{A}^{1|0}$ that is flat and the fibers are
    \begin{align*}
        \kappa^{-1}(a)\cong\begin{cases}
            \mathrm{Spec}\,R(\lambda),\hspace{1em}a\neq0\\
            \mathrm{Spec}\,\mathbb{C}[\xi^{I}x^{\mathbf{m}}v|(I,\mathbf{m})\in\mathrm{es}(K_{\mathfrak{b}}(\lambda),<)],\hspace{1em}a=0
        \end{cases}
    \end{align*}
\end{mainresult}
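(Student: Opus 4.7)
The plan is to adapt the classical Rees-algebra construction of Feigin--Fourier--Littelmann to the super setting. By this stage of the paper we may assume the set $\mathrm{es}(K_{\mathfrak{b}}(\lambda),<)$ consisting of pairs $(I,\mathbf{m})$, with $I$ indexing an ordered product of odd root vectors of $\mathfrak{n}^-$ and $\mathbf{m}\in\N_0^N$ indexing the even root vector monomial, has been shown to (i) parametrise a basis of $K_{\mathfrak{b}}(k\lambda)$ for every $k$, and (ii) carry a semigroup structure under the favourability hypothesis. These two inputs are what the argument rests on.

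First I would install a PBW-type filtration on $R(\lambda)$ indexed by the (super) degree coming from the chosen ordered basis of $\mathfrak{n}^-$, so that the $k$-th graded piece of $K_{\mathfrak{b}}(k\lambda)^*$ has a basis dual to the essential monomials of degree at most the relevant total degree. I would then form the Rees-type superalgebra
\begin{equation*}
    \mathcal{R}:=\bigoplus_{d\in\N_0} F^{d}R(\lambda)\cdot t^{d}\ \subseteq\ R(\lambda)[t],
\end{equation*}
where $F^\bullet$ denotes the PBW filtration and $t$ is an even variable on $\mathbb{A}^{1|0}$. The semigroup property of essential pairs ensures that multiplication in $R(\lambda)$ respects the filtration, so $\mathcal{R}$ is indeed a $\Z/2\Z$-graded subalgebra of $R(\lambda)[t]$.

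Next I would prove flatness of $\kappa:\mathrm{Spec}\,\mathcal{R}\to\mathbb{A}^{1|0}$. Since $\mathbb{A}^{1|0}$ is $\mathrm{Spec}\,\C[t]$ with $t$ in even degree, flatness reduces to freeness of $\mathcal{R}$ over $\C[t]$. The inputs (i) and (ii) above give a $\C$-basis of $R(\lambda)$ consisting of lifts of the essential monomials $\xi^{I}x^{\mathbf{m}}v$; multiplying each such basis element by the appropriate power of $t$ (namely $t$ to the filtration-degree) yields a $\C[t]$-basis of $\mathcal{R}$. This makes $\mathcal{R}$ free, hence flat over $\C[t]$. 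Identifying the fibres is then routine: for $a\neq 0$ the specialisation $t\mapsto a$ is an isomorphism $\mathcal{R}/(t-a)\cong R(\lambda)$ since the filtration is exhaustive, while for $a=0$ the quotient $\mathcal{R}/(t)$ is by construction the associated graded of the PBW filtration on $R(\lambda)$, which under the favourability/semigroup hypothesis identifies with the semigroup superalgebra $\C[\xi^{I}x^{\mathbf{m}}v\mid (I,\mathbf{m})\in\mathrm{es}(K_{\mathfrak{b}}(\lambda),<)]$.

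The main technical hurdle is handling signs and the odd generators when verifying that multiplication in $R(\lambda)$ respects the filtration and that the associated graded multiplication collapses exactly to the semigroup product encoded by concatenation of pairs $(I,\mathbf{m})$. Specifically, the anti-commutation relations among odd root vectors, together with the fact that squares of odd generators vanish, must be compatible with the chosen monomial order on the super-indexing set so that cross terms arising when multiplying $\xi^{I}x^{\mathbf{m}}$ and $\xi^{J}x^{\mathbf{n}}$ either land in $\xi^{I\sqcup J}x^{\mathbf{m}+\mathbf{n}}$ (up to sign) modulo strictly lower filtration degree, or are forced to vanish compatibly with $I\cap J\neq\emptyset$. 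Once this bookkeeping is carried out, the freeness argument above applies verbatim and yields the desired flat degeneration to the toric supervariety defined by the essential semigroup.
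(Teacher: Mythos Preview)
Your Rees-algebra outline has the right shape but the wrong filtration, and this is a genuine gap. The straightening law established in the paper (Proposition~\ref{prop-mult-straight}) says that $\eta_{I,\mathbf{m},k_1}\eta_{I',\mathbf{m}',k_2}$ equals a nonzero multiple of $\eta_{I+I',\mathbf{m}+\mathbf{m}',k_1+k_2}$ plus terms indexed by $(I'',\mathbf{m}'')$ that are \emph{strictly greater in the monomial order}, not of strictly smaller total degree. There is no reason for these higher-order cross terms to have $|I''|+|\mathbf{m}''|<|I+I'|+|\mathbf{m}+\mathbf{m}'|$, so the associated graded of the total-degree PBW filtration does \emph{not} collapse to the monomial superalgebra $\C[\xi^{I}x^{\mathbf{m}}v\mid (I,\mathbf{m})\in\mathrm{es}(K_{\mathfrak{b}}(\lambda),<)]$; it only produces an intermediate (abelianised) degeneration, exactly as in the classical FFL picture where the PBW-degenerate flag variety is not yet toric.

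What the paper actually does is filter $R(\lambda)$ by the monomial order itself (the ideals $R(\lambda)^{\geq(I,\mathbf{m})}$), whose associated graded \emph{is} the monomial algebra. Since this is not a $\Z$-filtration, one cannot form a Rees algebra directly; instead the paper invokes the standard Gr\"obner-theoretic trick of choosing a weight vector $w\in\Z^{q+n}$ that linearises the order on the finitely many exponents appearing in the relations $g_k=\overline{g}_k+\sum_j g_{k,j}$, and then defines $\mathcal{R}=\mathcal{S}[t]/\mathcal{I}$ by inserting powers $t^{w(U_{k,j},\mathbf{v}_{k,j})-w(J_k,\mathbf{n}_k)}$ in front of each higher term. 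A further subtlety you do not address is that the binomial relations living in the top graded component $(I^{h,\max},\mathbf{m}^{h,\max})$ must be added to $\mathcal{I}$ undeformed (the sets $\mathcal{I}'_h$ of Definition~\ref{definition-max-exchange}), since there is no room above them to absorb cross terms. Freeness of $\mathcal{R}$ over $\C[t]$ is then proved by hand via an explicit basis $\{\theta_{I,\mathbf{m},h}\cdot t^r\}$, not by a one-line Rees argument. Your proposal would be repaired by replacing the PBW filtration with the monomial-order filtration and then passing through a weight vector to obtain a $\Z$-grading before building the family.
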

Further, we also have that in the special cases of those bases given by Kus and Fourier in \cite{Kus-Fourier} one gets a toric supervariety as defined by Jankowski in \cite{Jankowski2025}.
\begin{mainresult}{(Corollary \ref{Kus-Fourier-Toric})}
    Let $\mathfrak{g}$ be a Lie superalgebra of type I, $\mathfrak{osp}(1|2n)$ or one of the basic exceptionals. Let $\lambda\in\mathfrak{h}^{*}$ be a dominant integral weight that is typical such that $k\lambda$ is also typical for all $k\in\N$. Denote by $P(\lambda)$ the corresponding lattice polytope from \cite{Kus-Fourier} and $S(\lambda)$ its lattice points.
    Then the superscheme $\mathrm{Spec}\, A$ for
    \begin{equation*}
        A:=\C[\xi^{I}x^{\mathbf{m}}v|(I,\mathbf{m})\in S(\lambda)]
    \end{equation*}
    is a faithful toric supervariety.
\end{mainresult}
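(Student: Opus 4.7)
The strategy is to deduce this directly from Corollary \ref{result1} once $S(\lambda)$ is identified with the essential set $\mathrm{es}(K_\mathfrak{b}(\lambda),<)$ for the monomial order underlying the Kus--Fourier construction, and then to recognise the resulting algebra as a toric superalgebra in the sense of Jankowski. The identification of lattice points with essential exponents should already be established in an earlier section of the paper, where the typicality of $\lambda$ ensures that the PBW-type basis of $K_\mathfrak{b}(\lambda)$ produced in \cite{Kus-Fourier} is parametrised by $S(\lambda)$ with leading exponents equal to those lattice points. The assumption that every $k\lambda$ is typical propagates the construction to multiples of $\lambda$, giving bases of $K_\mathfrak{b}(k\lambda)$ indexed by $S(k\lambda)$.

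Combined with the Minkowski-sum property $S(k\lambda)+S(\ell\lambda)=S((k+\ell)\lambda)$ enjoyed by the Kus--Fourier polytopes, a super analogue of the normality of FFLV polytopes, the essential set at level one generates the graded semigroup $\Gamma(\lambda,<)=\bigsqcup_{k\in\N_{0}} S(k\lambda)\times\{k\}$. Hence the favourability hypothesis of Corollary \ref{result1} is satisfied, and the special fiber of the flat degeneration constructed there is exactly $\mathrm{Spec}\,A$.

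It remains to verify that $\mathrm{Spec}\,A$ is a faithful toric supervariety in the sense of \cite{Jankowski2025}. The algebra $A$ is the semigroup superalgebra of $\Gamma(\lambda,<)$: its even subalgebra, corresponding to elements with $I=\emptyset$, is the algebra of a classical affine toric variety, while the odd generators $\xi^{I}$ satisfy only the anticommutation relations inherited from the exterior algebra on the negative odd root vectors. A supertorus with character lattice $\Z\Gamma(\lambda,<)$ then acts with the natural grading, and faithfulness follows because the generators in $S(\lambda)$ span this lattice as a group. The main obstacle is the rigorous verification that $A$ has no unexpected relations beyond these semigroup and anticommutation ones; this ultimately rests on the freeness of the Kus--Fourier PBW basis under typicality, and is precisely why the statement restricts to type I, $\mathfrak{osp}(1|2n)$, and the basic exceptionals, the cases for which \cite{Kus-Fourier} produces the polytope and basis.
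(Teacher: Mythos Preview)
Your proposal conflates two distinct tasks and, in doing so, misses the actual work the paper does here. The statement asserts only that $\mathrm{Spec}\,A$ is a faithful toric supervariety; it says nothing about the flat family. Routing through Corollary~\ref{result1} is therefore a detour: favourability and the Minkowski-sum property of the $S(k\lambda)$ are relevant for the degeneration theorem, but they are not what makes $A$ toric. The paper instead deduces the corollary from the preparatory Lemma~\ref{prop-monomial-toric-super}, which isolates purely combinatorial conditions on the generating set $K=S(\lambda)\subseteq\{0,1\}^{q}\times\N^{n}$ guaranteeing that the monomial algebra is a faithful toric supervariety in Jankowski's sense.

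The substantive gap in your argument is the step ``a supertorus with character lattice $\Z\Gamma(\lambda,<)$ acts with the natural grading, and faithfulness follows because the generators span this lattice.'' Being a toric supervariety in \cite{Jankowski2025} requires that $A$ be GFRR and that $A$ sit as a $\mathfrak{t}$-subrepresentation of $\C[T]$ for a genuine supertorus $T$. Neither is automatic from the semigroup description. Concretely, the odd derivations \eqref{eq-deriv-odd-toric} contain a term $-\partial/\partial\xi_{i}$, so closure of $A$ under the $\mathfrak{t}$-action forces the combinatorial condition that $(I,\mathbf{m})\in K$ implies $(I-e_{i},\mathbf{m})\in K$ whenever $I_{i}=1$; this ``downward closure in the odd directions'' is a specific feature of the Kus--Fourier polytopes that has to be checked and is the hypothesis driving Lemma~\ref{prop-monomial-toric-super}. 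Likewise, faithfulness is obtained not from a lattice-spanning argument but from exhibiting a non-nilpotent weight vector $f$ with $A[f^{-1}]\cong\C[T]$, which in turn needs that for each odd index $i$ some $(e_{i},\mathbf{m}^{i})$ lies in $\langle K\rangle$. Finally, the GFRR verification (reducedness and $\mathrm{Nil}(A)=J_{A}$) is carried out in the lemma via an explicit leading-term argument; your appeal to ``no unexpected relations beyond semigroup and anticommutation ones'' does not address why $\overline{\mathrm{zdiv}(A)}=\mathrm{zdiv}(\overline{A})$ or why every nilpotent lies in $J_{A}$.
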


We are going to proceed as follows. In section \ref{sec-preliminaries}, we recall some basic notions of Lie superalgebras. In section \ref{sec-essential-monomials}, we define the superalgebra we are going to deal with. In section \ref{sec-basis-straight-laws}, we inspect the multiplication of this superalgbera more closely and retrieve a straightening law. Section \ref{sec-favourable} is devoted to the notion of a favourable module and retrieving an ideal representing it. In section \ref{sec-flat-family}, we are going to construct the flat family. Lastly, section \ref{sec-supergeom} states the result in a more geometric setting and provides a criterion for the special superalgebra generated by monomials to correspond to an affine toric supervariety and what possible torus actions are.

\noindent
\textbf{Acknowledgment:}
The author would like to thank his advisor, Ghislain Fourier, for suggesting the special and exciting question and Xin Fang for many helpful comments on previous drafts. The author also thanks Eric Jankowski and Alexander Sherman for several helpful discussions. The author is funded by the Deutsche Forschungsgemeinschaft (DFG, German Research Foundation) through \textit{Symbolic Tools in Mathematics and their Application} (TRR 195, project-ID 286237555).
\section{Preliminaries}\label{sec-preliminaries}
In this section, we are going to recall the notions on Lie superalgebras, which can be found in \cite{Fioresi,Cheng-Wang,Serg_Rep}.\smallskip

A \textit{supervector space} is a $\Z/2\Z$-graded vector space $V=V_{\overline{0}}\oplus V_{\overline{1}}$. 
We refer to its \textit{dimension} as the tuple $\dim V:=\dim V_{\overline{0}}|\dim V_{\overline{1}}$. 
For a homogeneous element $v\in V_{i}$ we refer to $|v|=i\in\{\overline{0},\overline{1}\}$ as its \textit{parity}. We call a linear map between supervector spaces $A:V\rightarrow W$ \textit{even} if it preserves the grading. If it flips the grading, then we call it \textit{odd}. Hence for supervector spaces $V,W$ we have that the set of of linear maps $\mathrm{Hom}_{\C}(V,W)$ is itself a supervector space. The dual space $V^*$ becomes a superspace by setting $V^*:=\mathrm{Hom}_{\C}(V,\C^{1|0}:=\C\oplus\{0\})$.\smallskip

A \textit{superalgebra} is a supervector space $A=A_{\overline{0}}\oplus A_{\overline{1}}$ such that the multiplication map is even, i.e. $A_{\overline{i}}A_{\overline{j}}\subseteq A_{\overline{i+j}}$. We call it \textit{commutative} if further
\begin{equation*}
    ab=(-1)^{|a||b|}ba
\end{equation*}
for homogenous elements $a,b\in A$.\smallskip

We call a supervector space $\mathfrak{g}$ a \textit{Lie superalgebra} if there exists a bilinear even map $[\cdot,\cdot]:\mathfrak{g}\times\mathfrak{g}\rightarrow \mathfrak{g}$ such that for all homogeneous $x,y,z\in\mathfrak{g}$
\begin{enumerate}
    \item{
        $[x,y]=-(-1)^{|x|\cdot|y|}[y,x]$
    }
    \item{
        $[x,[y,z]]=[[x,y],z]+(-1)^{|x|\cdot|y|}[y,[x,z]]$
    }
\end{enumerate}
We call $\mathfrak{g}$ \textit{simple} if it has no non-trivial $\mathbb{Z}/2\Z$-graded ideal. We further call it \textit{basic} if it is simple, $\mathfrak{g}_{\overline{0}}$ is a reductive Lie algebra and $\mathfrak{g}$ admits a non-degenerate invariant bilinear $(\cdot,\cdot)$, i.e.
\begin{equation*}
    ([x,y],z)=(x,[y,z]),\hspace{1em}(x,y)=(-1)^{|x|\cdot|y|}(y,x),\hspace{1em}\text{for homogenous }x,y\in\mathfrak{g}
\end{equation*}
as well as
\begin{equation*}
    (x,y)=0,\hspace{1em}\text{ for $x\in\mathfrak{g}_{\overline{0}}$, $y\in\mathfrak{g}_{\overline
    1}$}
\end{equation*}

Kac has classified basic Lie superalgebras.

\begin{theorem}[{\cite{Kac_Lie}}]
    The basic Lie superalgebras over $\mathbb{C}$ that are not purely even Lie algebras are
    \begin{enumerate}
        \item $\mathfrak{sl}(m|n)$, $n>m\geq 1$;
        \item $\mathfrak{psl}(n|n)$, $n\geq 2$;
        \item $\mathfrak{osp}(m|2n)$ $m,n\geq 1$ $(m,n)\neq(2,1), (4,1)$;
        \item $D(2, 1;\alpha)$, $\alpha\neq0,1$;
        \item $F(4)$;
        \item $G(3)$.
    \end{enumerate}
    The Lie superalgebras, $\mathfrak{sl}(n|m)$, $\mathfrak{psl}(n|n)$, $\mathfrak{\mathfrak{osp}}(2|2n)$ are referred to as type I. The other members of this list are referred to as type II.
\end{theorem}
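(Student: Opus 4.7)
The plan is to follow Kac's original strategy of reducing the classification to a combinatorial problem about generalized root systems and then enumerating those. First, I would pick a Cartan subalgebra $\mathfrak{h}\subset\mathfrak{g}_{\bar{0}}$ (which exists because $\mathfrak{g}_{\bar{0}}$ is reductive). Since $\mathfrak{h}$ acts semisimply on $\mathfrak{g}$ via the adjoint action, we obtain a root space decomposition $\mathfrak{g}=\mathfrak{h}\oplus\bigoplus_{\alpha\in\Delta}\mathfrak{g}_{\alpha}$ in which each $\mathfrak{g}_{\alpha}$ is $\mathbb{Z}/2\mathbb{Z}$-graded, so $\Delta=\Delta_{\bar{0}}\cup\Delta_{\bar{1}}$. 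Transferring the non-degenerate invariant form $(\cdot,\cdot)$ to $\mathfrak{h}^{*}$ equips the root lattice with a bilinear form compatible with the bracket.

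Next, I would extract from the Jacobi identity and the invariance of $(\cdot,\cdot)$ a list of axioms for a generalized root system: integrality of $2(\alpha,\beta)/(\beta,\beta)$ for even $\beta$, the possible presence of \emph{isotropic} odd roots satisfying $(\alpha,\alpha)=0$, and the existence of a reflection action on $\Delta_{\bar{0}}$ by the even Weyl group. Simplicity of $\mathfrak{g}$ translates to an indecomposability condition on $\Delta$, while the reductivity of $\mathfrak{g}_{\bar{0}}$ pins down the underlying even root system to be a union of classical $A, B, C, D$ components, with small exceptions in low rank.

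The classification is then carried out by choosing a set of simple roots (necessarily only up to odd reflections) and encoding the resulting Cartan-style matrix in a Kac-Dynkin diagram that distinguishes white, grey-isotropic and black nodes. A combinatorial enumeration, rank-by-rank, subject to the constraint on $\mathfrak{g}_{\bar{0}}$, yields the infinite series $\mathfrak{sl}(m|n)$, $\mathfrak{psl}(n|n)$, $\mathfrak{osp}(m|2n)$ and, in low ranks, the exceptional families $D(2,1;\alpha)$, $F(4)$ and $G(3)$. Existence for each diagram is verified by writing down Chevalley-style generators subject to Serre-type relations and quotienting by the maximal graded ideal meeting $\mathfrak{h}$ trivially; the low-rank identifications $\mathfrak{osp}(2|2)\cong\mathfrak{sl}(2|1)$ and $\mathfrak{osp}(4|2)\cong D(2,1;1)$ force the exclusions $(m,n)\neq(2,1),(4,1)$ to avoid redundancy.

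The main obstacle is the handling of odd isotropic roots. Unlike in the purely even setting, no well-behaved reflection is naturally attached to an isotropic root, so one cannot conjugate any two Borel subalgebras by a single Weyl group. One must track several inequivalent simple systems linked by odd reflections and verify that all of them produce the same Lie superalgebra. This is also where the genuinely new exceptionals $D(2,1;\alpha)$, $F(4)$, $G(3)$ arise and must be carefully separated from the classical series, with the continuous parameter $\alpha$ in the $D(2,1;\alpha)$ case reflecting that the Jacobi identity imposes a one-parameter family of compatible bilinear forms rather than a rigid structure.
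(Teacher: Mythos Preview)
The paper does not prove this theorem at all: it is stated as a quoted result with the citation \cite{Kac_Lie} and no argument is given, since the classification is background material rather than a contribution of the paper. Your outline is a reasonable high-level sketch of Kac's original strategy, so in that sense it is aligned with what the paper defers to, but there is nothing in the paper itself to compare against beyond the bare citation.
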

\begin{remark}
    Some pieces of literature also declare $\mathfrak{gl}(m|n)$ to be basic and then of type I, see \cite{Cheng-Wang}.
\end{remark}
Next, we want to state the notions of roots and Borel subalgebras for Lie superalgebras.\smallskip

Let $\mathfrak{g}$ be a basic Lie superalgebra. We define a \textit{Cartan subalgebra} $\mathfrak{h}$ to be a Cartan subalgebra of the even part $\mathfrak{g}_{\overline{0}}$.\smallskip

For a given $\alpha\in\mathfrak{h}^{*}$, we define the root space
\begin{equation*}
    \mathfrak{g}_{\alpha}=\left\{x\in\mathfrak{g}\,\middle| \,[h,x]=\alpha(h)x\,\text{ for all }h\in\mathfrak{h}\right\}.
\end{equation*}
We call $\alpha$ a \textit{root} if $\mathfrak{g}_\alpha\neq\{0\}$ and denote the set of roots by $R$.
This yields us the decomposition
\begin{equation*}
    \mathfrak{g}=\mathfrak{h}\oplus\bigoplus_{\alpha\in R}\mathfrak{g}_{\alpha}
\end{equation*}
For basic Lie superalgebras, we have that $\dim\mathfrak{g}_\alpha=1|0$ or $\dim\mathfrak{g}_\alpha=0|1$ \cite[Theorem 1.8]{Cheng-Wang}, hence allowing us to define the set of even and odd roots via
\begin{equation*}
    R_{\overline{0}}=\left\{\alpha\in R\,\middle|\mathfrak{g}_{\alpha}\cap\mathfrak{g}_{\overline{0}}\neq\{0\}\right\},\hspace{1em}R_{\overline{1}}=\left\{\alpha\in R\,\middle|\mathfrak{g}_{\alpha}\cap\mathfrak{g}_{\overline{1}}\neq\{0\}\right\}.
\end{equation*}
This, in turn, leads us to the notions of positive systems and Borel subalgebras.\smallskip

Let $E$ be the real space spanned by the roots $R$. For a fixed total order $\geq$ on $E$ that is compatible with the real vector space structure on $E$, we define the \textit{positive system} $R^{+}$ as the set of all roots $\alpha\in R$ such that $\alpha>0$. The elements of $R^{+}$ are referred to as \textit{positive roots}. For such a positive system, we define the subset $\Delta\subseteq R^{+}$ of \textit{simple roots} as those that cannot be written as a sum of two positive roots.
Similarly, we also define the \textit{negative roots} to be the set $R^{-}$ of all roots $\alpha\in R$ such that $\alpha<0$.

For a given positive system $R^{+}\subseteq R$, we define the \textit{Borel subalgebra}
\begin{equation*}
    \mathfrak{b}=\mathfrak{h}\oplus\bigoplus_{\alpha\in R^{+}}\mathfrak{g}_{\alpha}=:\mathfrak{h}\oplus\mathfrak{n}^{+}
\end{equation*}
In turn, we also get the triangular decomposition
\begin{equation*}
    \mathfrak{g}=\mathfrak{n}^{-}\oplus\mathfrak{h}\oplus\mathfrak{n}^{+}=\bigoplus_{\alpha\in R^{-}}\mathfrak{g}_{\alpha}\oplus\mathfrak{b}
\end{equation*}
\begin{remark}
    Similarly to the even case, one can define a Weyl group and its action on Borel subalgebras. However, it is not the case that all Borel subalgebras are conjugate under the Weyl group action, see \cite[Section 1.3 and 1.4]{Cheng-Wang}
\end{remark}
We can form for $\lambda\in\mathfrak{h}^{*}$ the \textit{Verma module} with respect to a Borel subalgebra $\mathfrak{b}$ via
\begin{equation*}
    M_{\mathfrak{b}}(\lambda):=U(\mathfrak{g})\otimes_{U(\mathfrak{b})}C_{\lambda},
\end{equation*}
where $C_\lambda$ is the one-dimensional $\mathfrak{b}$-module with trivial action of $\mathfrak{n}^{+}$ and $\mathfrak{h}$ acting by the weight $\lambda$.
This module again has a unique simple quotient denoted by $L_{\mathfrak{b}}(\lambda)$. We also say that $\lambda$ is \textit{dominant integral} if $L_{\mathfrak{b}}(\lambda)$ is finite-dimensional. 
In the case of $\lambda$ being dominant integral, then we define the unique maximal finite dimensional quotient of $M_{\mathfrak{b}}(\lambda)$ as $K_{\mathfrak{b}}(\lambda)$.
\section{Essential monomials and multiplications}\label{sec-essential-monomials}
From now on, we fix a Borel subalgebra $\mathfrak{b}$ of $\mathfrak{g}$. In this section, we will introduce the superalgebra subject to our degeneration. The multiplication arises from the following result.
\begin{proposition}[Cartan mapping]\label{prop-cartan-embedding}
    Let $\lambda\in\mathfrak{h}^*$ be a dominant integral weight such that $k\lambda$ remains dominant integral for any $k\in\N$ and fix natural numbers $m,n\in\N$. Then, the map of $\mathfrak{g}$-modules defined on the highest weight vector given by
    \begin{align*}
        C_{m,n}:K_{\mathfrak{b}}((m+n)\lambda)&\rightarrow K_{\mathfrak{b}}(m\lambda)\otimes_{\C} K_{\mathfrak{b}}(n\lambda)\\
        v_{(m+n)\lambda}&\rightarrow v_{m\lambda}\otimes v_{n\lambda}
    \end{align*}
    is well-defined.
\end{proposition}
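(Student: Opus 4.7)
The plan is to construct the map from the Verma module $M_{\mathfrak{b}}((m+n)\lambda)$ and then argue it descends to $K_{\mathfrak{b}}((m+n)\lambda)$ by the defining maximality of that quotient. First I would verify that the element $v_{m\lambda}\otimes v_{n\lambda}\in K_{\mathfrak{b}}(m\lambda)\otimes_{\C} K_{\mathfrak{b}}(n\lambda)$ is a highest weight vector of weight $(m+n)\lambda$. That it has weight $(m+n)\lambda$ follows directly from $\mathfrak{h}$ acting diagonally on the tensor product. For the $\mathfrak{n}^{+}$-annihilation, pick any homogeneous $x\in\mathfrak{n}^{+}$ and compute
\begin{equation*}
    x\cdot(v_{m\lambda}\otimes v_{n\lambda})=(x\cdot v_{m\lambda})\otimes v_{n\lambda}+(-1)^{|x|\cdot|v_{m\lambda}|}v_{m\lambda}\otimes(x\cdot v_{n\lambda})=0,
\end{equation*}
using the standard super coproduct convention on $U(\mathfrak{g})$ and the fact that both $v_{m\lambda}$ and $v_{n\lambda}$ are $\mathfrak{n}^{+}$-annihilated highest weight vectors.

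Next I would invoke the universal property of the Verma module $M_{\mathfrak{b}}((m+n)\lambda)$: since $v_{m\lambda}\otimes v_{n\lambda}$ is an $\mathfrak{n}^{+}$-annihilated vector of weight $(m+n)\lambda$ in a $\mathfrak{g}$-module, there is a unique $\mathfrak{g}$-module homomorphism
\begin{equation*}
    \widetilde{C}_{m,n}:M_{\mathfrak{b}}((m+n)\lambda)\longrightarrow K_{\mathfrak{b}}(m\lambda)\otimes_{\C} K_{\mathfrak{b}}(n\lambda)
\end{equation*}
sending the canonical highest weight generator to $v_{m\lambda}\otimes v_{n\lambda}$.

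Finally, I would use that the image of $\widetilde{C}_{m,n}$ is finite-dimensional, because it is contained in the (finite-dimensional) tensor product $K_{\mathfrak{b}}(m\lambda)\otimes K_{\mathfrak{b}}(n\lambda)$. Thus $\widetilde{C}_{m,n}$ factors through some finite-dimensional quotient of $M_{\mathfrak{b}}((m+n)\lambda)$. By the defining property of $K_{\mathfrak{b}}((m+n)\lambda)$ as the unique maximal finite-dimensional quotient of $M_{\mathfrak{b}}((m+n)\lambda)$ (which exists because $(m+n)\lambda$ is dominant integral by hypothesis), the kernel of the projection $M_{\mathfrak{b}}((m+n)\lambda)\twoheadrightarrow K_{\mathfrak{b}}((m+n)\lambda)$ is contained in $\ker\widetilde{C}_{m,n}$, and therefore $\widetilde{C}_{m,n}$ descends to the desired well-defined $\mathfrak{g}$-module map $C_{m,n}$.

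The only delicate step is the last one: one must confirm that the uniqueness and maximality of the finite-dimensional quotient $K_{\mathfrak{b}}(\mu)$ introduced at the end of Section~\ref{sec-preliminaries} really implies the universal factorisation property in the super setting, i.e., that every finite-dimensional quotient of $M_{\mathfrak{b}}(\mu)$ factors through $K_{\mathfrak{b}}(\mu)$. This is the point where the hypothesis that $k\lambda$ is dominant integral for every $k\in\N$ is essential, since without it $K_{\mathfrak{b}}((m+n)\lambda)$ need not even be defined. Everything else is a direct super-translation of the classical Cartan-embedding argument, so I do not expect any further obstacles.
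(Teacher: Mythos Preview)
Your proposal is correct and follows essentially the same argument as the paper: identify $v_{m\lambda}\otimes v_{n\lambda}$ as a $\mathfrak{b}$-highest weight vector of weight $(m+n)\lambda$, note that the cyclic $U(\mathfrak{g})$-module it generates is finite-dimensional, and conclude via the defining maximality of $K_{\mathfrak{b}}((m+n)\lambda)$. You simply make the intermediate passage through the Verma module and the factorisation property of $K_{\mathfrak{b}}$ explicit, whereas the paper compresses this into a single sentence.
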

\begin{proof}
    The vector on the right-hand side is of $\mathfrak{b}$-highest weight $(m+n)\lambda$ and the tensor product is finite-dimensional hence also the $U(\mathfrak{g})$-module generated by $v_{m\lambda}\otimes v_{n\lambda}$. Thus, we know that this submodule is a quotient of $K_{\mathfrak{b}}((m+n)\lambda)$.
\end{proof}
We assume from now on that $\lambda\in\mathfrak{h}^{*}$ as well as $k\lambda\in\mathfrak{h}^{*}$ are dominant integral for any $k\in\N$. 

In turn, dualizing provides us with a map of $\mathfrak{g}$-modules
\begin{equation}\label{eq-mult-cartan}
    K_{\mathfrak{b}}(m\lambda)^*\otimes K_{\mathfrak{b}}(n\lambda)^*\rightarrow K_{\mathfrak{b}}((m+n)\lambda)^*
\end{equation}
We thus define the superalgebra
\begin{equation*}
    R(\lambda)=\bigoplus_{k\in\N_{0}}K_{\mathfrak{b}}(k\lambda)^*
\end{equation*}
with multiplication given by \eqref{eq-mult-cartan}.
\begin{remark}\label{rem-cartan-mult-well-defined}
    We specify what the multiplication from \eqref{eq-mult-cartan} looks like:

    We have the isomorphism of finite-dimensional $\mathfrak{g}$-modules, see \cite[Appendix A.2.3.]{Musson}
    \begin{align*}
        \gamma_{V,W}:W^*\otimes V^*&\rightarrow(V\otimes W)^*\\
        \varphi\otimes\psi&\mapsto\left(r\otimes s\mapsto\psi(r)\varphi(s)\right)
    \end{align*}
    Hence, the multiplication from \eqref{eq-mult-cartan} is the composition
    \begin{equation*}
        K_{\mathfrak{b}}(m\lambda)^*\otimes K_{\mathfrak{b}}(n\lambda)^*\xrightarrow{\gamma^{*}_{K_{\mathfrak{b}}(n\lambda),   K_{\mathfrak{b}}(m\lambda)}} ( K_{\mathfrak{b}}(n\lambda)\otimes  K_{\mathfrak{b}}(m\lambda))^*\xrightarrow{C_{n,m}^{*}}K_{\mathfrak{b}}((m+n)\lambda)^{*}
    \end{equation*}
    To see that this multiplication is indeed commutative, we note the following commutative diagram of $\mathfrak{g}$-modules
    \begin{equation*}
        \begin{tikzcd}
        	{K_{\mathfrak{b}}(m\lambda)^*\otimes K_{\mathfrak{b}}(n\lambda)^*} && {( K_{\mathfrak{b}}(n\lambda)\otimes  K_{\mathfrak{b}}(m\lambda))^*} \\
        	&&&& {K_{\mathfrak{b}}((m+n)\lambda)^{*}} \\
        	{K_{\mathfrak{b}}(n\lambda)^*\otimes K_{\mathfrak{b}}(m\lambda)^*} && {( K_{\mathfrak{b}}(m\lambda)\otimes  K_{\mathfrak{b}}(n\lambda))^*}
        	\arrow["{\gamma^{*}_{K_{\mathfrak{b}}(n\lambda),   K_{\mathfrak{b}}(m\lambda)}}", from=1-1, to=1-3]
        	\arrow["{C_{n,m}^{*}}", from=1-3, to=2-5]
        	\arrow["{\tau_{K_{\mathfrak{b}}(n\lambda)^*,K_{\mathfrak{b}}(m\lambda)^*}}", from=3-1, to=1-1]
        	\arrow["{\gamma^{*}_{K_{\mathfrak{b}}(m\lambda),   K_{\mathfrak{b}}(n\lambda)}}"', from=3-1, to=3-3]
        	\arrow["{\tau_{K_{\mathfrak{b}}(m\lambda),K_{\mathfrak{b}}(n\lambda)}^{*}}", from=3-3, to=1-3]
        	\arrow["{C_{m,n}^{*}}"', from=3-3, to=2-5]
        \end{tikzcd},
    \end{equation*}
    where $\tau_{V,W}(v\otimes w)=(-1)^{|v|\cdot|w|}w\otimes v$ for $\mathfrak{g}$-modules $V$ and $W$.
    
    The triangle on the right-hand side commutes as $\tau_{K_{\mathfrak{b}}(m\lambda),K_{\mathfrak{b}}(n\lambda)}(v_{m\lambda}\otimes v_{n\lambda})=v_{n\lambda}\otimes v_{m\lambda}$, since $v_{m\lambda}$ and $v_{n\lambda}$ are assumed to be even.

    Further, the square on the left-hand-side commutes as it boils down to checking the equation
    \begin{equation*}
        (-1)^{|c|\cdot|d|}\varphi(d)\psi(c)=(-1)^{|\varphi|\cdot|\psi|}\varphi(d)\psi(c)
    \end{equation*}
    for $\mathfrak{g}$-modules $V,W$ with $\varphi\in V^*$, $\psi\in W^*$, $d\in V$ and $c\in W$, all assumed to be $\Z/2\Z$-homogeneous.

    The term $\varphi(d)\psi(c)$ can only be non-zero if $\varphi$ and $d$  as well as $\psi$ and $c$ share the same parity, respectively. Hence, the signs of both sides also agree.
\end{remark}
We recall the PBW theorem
\begin{theorem}[{\cite[Theorem 1.36]{Cheng-Wang}}]
    For a homogenous basis of $\mathfrak{g}$ with $x_1,\dots,x_n$ a basis of $\mathfrak{g}_{\overline{0}}$ and $y_1,\dots,y_q$ a basis of $\mathfrak{g}_{\overline{1}}$, the universal enveloping algebra $U(\mathfrak{g})$ has a basis of the form
    \begin{equation*}
        y_{i_{1}}\cdots y_{i_{l}}x_{1}^{k_{1}}\dots x_{n}^{k_{n}},\text{ for $k_i\geq 0$, $1\leq i_1<\cdots<i_l\leq q$}
    \end{equation*}
\end{theorem}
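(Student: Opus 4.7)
The plan is to establish the theorem in two steps, closely paralleling the classical PBW argument but taking care of the $\mathbb{Z}/2\mathbb{Z}$-grading and the anticommutativity of the odd generators.

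First, I would show that the listed monomials span $U(\mathfrak{g})$. This follows by induction on the length of a word in the chosen homogeneous basis $x_1,\ldots,x_n,y_1,\ldots,y_q$. Any product can be brought into the prescribed form (odd generators first, in strictly increasing index order, followed by even generators in nondecreasing index order) by iteratively applying the super-commutator identity $ab - (-1)^{|a||b|}ba = [a,b]$ to adjacent out-of-order pairs. Each swap introduces a correction term coming from $[a,b]\in\mathfrak{g}$, which replaces two letters by one and thereby strictly shortens the word; similarly, whenever $y_i y_i$ appears with $y_i$ odd, one has $y_i^2 = \tfrac{1}{2}[y_i,y_i]\in\mathfrak{g}_{\overline{0}}$, which also shortens the word. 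A lexicographic induction on length then yields spanning.

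Second, I would prove linear independence by constructing a representation of $\mathfrak{g}$ on the free supercommutative algebra
\begin{equation*}
    S \;=\; \mathbb{C}[X_1,\ldots,X_n]\otimes\Lambda[Y_1,\ldots,Y_q]
\end{equation*}
under which the putative basis monomial $y_{i_1}\cdots y_{i_l}x_1^{k_1}\cdots x_n^{k_n}$ sends $1\in S$ to $Y_{i_1}\cdots Y_{i_l}X_1^{k_1}\cdots X_n^{k_n}$, modulo terms that are strictly lower with respect to a suitable filtration (say, by total polynomial degree). Since the monomials $Y_{i_1}\cdots Y_{i_l}X_1^{k_1}\cdots X_n^{k_n}$ are linearly independent in $S$ by construction, evaluating any vanishing linear combination in $U(\mathfrak{g})$ against $1\in S$ forces all coefficients to vanish.

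The main obstacle is the construction of this action. One specifies, on generators, operators whose leading term is multiplication by $X_i$ or $Y_j$ and whose lower-order terms are chosen so that the graded commutators on $S$ match the brackets in $\mathfrak{g}$; verifying the super-Jacobi identity with the correct signs is precisely the delicate bookkeeping that distinguishes the super case from the purely even one. An alternative, cleaner packaging is the associated-graded argument: put the standard increasing filtration on $U(\mathfrak{g})$ and prove that the canonical surjection from the free supercommutative algebra $S(\mathfrak{g}_{\overline{0}})\otimes\Lambda(\mathfrak{g}_{\overline{1}})$ onto $\mathrm{gr}\,U(\mathfrak{g})$ is an isomorphism; however producing a one-sided inverse still requires the same representation-theoretic input on $S$, so the core difficulty is unchanged.
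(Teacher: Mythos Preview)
The paper does not supply a proof of this statement; it is simply recalled from \cite[Theorem~1.36]{Cheng-Wang} as background and used without further justification. There is therefore nothing in the paper to compare against.

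Your outline is the standard two-step PBW argument and is correct as a plan. The spanning step via straightening (swapping adjacent out-of-order generators, collapsing $y_i^2=\tfrac{1}{2}[y_i,y_i]$, and inducting on length) is fine. For linear independence you correctly identify the crux: one must build a $\mathfrak{g}$-action on $S=\mathbb{C}[X_1,\ldots,X_n]\otimes\Lambda[Y_1,\ldots,Y_q]$ whose leading term is multiplication, and then check that the super-bracket relations hold as operators. In an actual write-up this is usually done by defining the operators inductively on the degree filtration of $S$ and verifying $[\sigma_a,\sigma_b]=\sigma_{[a,b]}$ degree by degree; the sign bookkeeping you flag is genuine but routine. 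The associated-graded reformulation you mention is equivalent and does not shortcut this verification, as you note.
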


In particular, we have that $K_{\mathfrak{b}}(\lambda)=U(\mathfrak{n}^{-})v_\lambda$.
We fix a homogeneous basis of odd vectors $y_1,\dots,y_q$ and and even vectors $f_1,\dots,f_n$ of $\mathfrak{n}^{-}$.

Hence, we get that the module $K_{\mathfrak{b}}(\lambda)$ is spanned by
\begin{equation*}
    y_{i_{1}}\cdots y_{i_{l}}f_{1}^{k_{1}}\dots f_{n}^{k_{n}}v_{\lambda},\text{ for $k_i\geq 0$, $1\leq i_1<\cdots<i_l\leq q$}
\end{equation*}

In particular, we have the following filtration of $K_{\mathfrak{b}}(\lambda)$
\begin{equation*}
    K_{\mathfrak{b}}(\lambda)=\bigcup\limits_{k\in\N_0}U(\mathfrak{n}^{-})_kv_\lambda
\end{equation*}
where the space $U(\mathfrak{n}^{-})_kv_\lambda$ is spanned by
\begin{equation*}
    y_{i_{1}}\cdots y_{i_{l}}f_{1}^{k_{1}}\dots f_{n}^{k_{n}}v_{\lambda},\hspace{1em}l+k_1+\cdots+k_n\leq k
\end{equation*}\smallskip

We now consider monomial orders on the free supercommutative superalgebra $\mathbb{C}[x_1,\dots,x_n,\xi_1,\dots,\xi_q]$.
\begin{definition}[{\cite[Definition 3.3.3]{Gröbner_Super}}]
    Let $<$ be a total-order on $\N^n\times\{0,1\}^q$ such that $\alpha+\delta<\beta+\delta$ whenever $\alpha<\beta$, provided $\alpha+\delta,\beta+\delta\in\N^n\times\{0,1\}^q$,where $\alpha,\beta,\delta\in\N^n\times\{0,1\}^q$ and sums are taken componentwise in $\N^n\times\N^q$ (which contains $\N^n\times\{0, 1\}^q$).\\
    Such a total order on $\N^n\times\{0, 1\}^q$ induces a total order on the monomials of $\mathbb{C}[x_1,\dots,x_n,\xi_1,\dots,\xi_q]$, which will be called a \textit{quasi-monomial order} or simply \textit{(monomial) order}.\\
\end{definition}
\begin{remark}
    If we restrict a monomial order of $\N^n\times\N^q$ onto $\N^n\times\{0,1\}^q$, we call this restricted order \textit{induced}, following \cite{Gröbner_Super}.
\end{remark}
We are now going to assume we are working with an order $<$ on the monomials of $\mathbb{C}[x_1,\dots,x_n,\xi_1,\dots,\xi_q]$ that is induced from a monomial order of $\N^n\times\N^q$.

\begin{definition}
    Let $\dim\mathfrak{n}^{-}=n|q$ and fix $q$ integers $n+q\geq i_{q}>\cdots>i_{1}\geq 1$. Choose a basis of $\mathfrak{n}^{-}$ with elements $f_{n+q},\dots,f_{1}$ where $f_{i_{s}}$ is odd for all $1\leq s\leq q$. For non-negative integers $m_{n+q},\dots,m_{1}\in\Z_\geq0$ where $m_{i_{s}}\in\{0,1\}$, we define the ordered monomial for $\mathbf{m}=(m_{p+q},\dots,m_{1})$
    \begin{equation*}
        f^{\mathbf{m}}:=f_{n+q}^{m_{n+q}}\cdots f_{1}^{m_{1}}
    \end{equation*}

    If the ordering of the basis is fixed, then we are going to simplify notation by writing $(I,\mathbf{m})$, where $I=(m_{i_{q}},\dots,m_{i_{1}})$ and $\mathbf{m}$ is the ordered muli-exponent in the purely even elements.
\end{definition}

This leads us to the definition adapted from \cite{Favourable}. 

\begin{definition}
    We call a monomial $f^{(I,\mathbf{m})}$ \textit{essential} for the module $K_{\mathfrak{b}}(\lambda)$ with respect to the order $<$ if
    \begin{equation*}
    f^{(I,\mathbf{m})}v_\lambda\not\in\mathrm{span}\langle f^{(I',\mathbf{m}')}v_\lambda|(I',\mathbf{m}')<(I,\mathbf{m})\rangle
    \end{equation*}
    In that case, we refer to the monomial $(I,\mathbf{m})$ as \textit{essential} for $K_\mathfrak{b}(\lambda)$ as well.
\end{definition}

Before we go ahead and state the main property of being essential, we introduce
the following lemma.

\begin{lemma}\label{lemma-tensor-action-sign}
    Let $k,k'\in\N$ and let $v_{k\lambda}$ and $v_{k'\lambda}$ be the highest weight vectors of $K_{\mathfrak{b}}(k\lambda)$ and $K_{\mathfrak{b}}(k'\lambda)$.
    Then we have
    \begin{equation*}
        f^{(I,\mathbf{m})}.(v_{k\lambda}\otimes v_{k'\lambda})=\sum\limits_{(J',\mathbf{m}')+(J'',\mathbf{m}'')=(I,\mathbf{m})}(-1)^{K_{J',J''}}\prod\limits_{i=1}^{n}\begin{pmatrix}
            m_i\\
            m'_i
        \end{pmatrix}f^{(J',\mathbf{m}')}v_{k\lambda}\otimes f^{(J'',\mathbf{m}'')}v_{k'\lambda}
    \end{equation*}
    where $K_{J',J''}$ is the sum
    \begin{equation*}
        K_{J',J''}=\sum^q_{j=1}\sum^q_{i=j+1} J'_j\cdot J''_i
    \end{equation*}
\end{lemma}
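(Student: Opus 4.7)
The plan is to exploit the Hopf superalgebra structure of $U(\mathfrak{g})$. The action of $U(\mathfrak{g})$ on a tensor product of $\mathfrak{g}$-modules is given by the coproduct $\Delta$, which is determined by being primitive on $\mathfrak{g}$ (so $\Delta(x) = x \otimes 1 + 1 \otimes x$ for $x \in \mathfrak{g}$) and multiplicative with respect to the super-tensor multiplication $(a \otimes b)(c \otimes d) = (-1)^{|b|\cdot|c|}\, ac \otimes bd$. Therefore
$$f^{(I,\mathbf{m})} \cdot (v_{k\lambda} \otimes v_{k'\lambda}) \;=\; \Delta\bigl(f^{(I,\mathbf{m})}\bigr)\,(v_{k\lambda} \otimes v_{k'\lambda}),$$
and by multiplicativity $\Delta\bigl(f^{(I,\mathbf{m})}\bigr) = \prod_{s=n+q}^{1} \Delta(f_s^{m_s})$, with the product taken in the super-tensor algebra in the specified PBW order.

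First I would compute $\Delta$ on each individual factor. For an even $f_i$, the elements $f_i \otimes 1$ and $1 \otimes f_i$ commute (both are even in the tensor algebra), so the classical binomial theorem yields $\Delta(f_i^{m_i}) = \sum_k \binom{m_i}{k}\, f_i^k \otimes f_i^{m_i-k}$. For an odd $f_{i_s}$ with $m_{i_s} \in \{0,1\}$, one simply has $\Delta(f_{i_s}) = f_{i_s} \otimes 1 + 1 \otimes f_{i_s}$ (or $\Delta(1) = 1 \otimes 1$ when $m_{i_s}=0$), which is a "binomial" expansion with all coefficients equal to $1$. Expanding the full product and regrouping the resulting terms into PBW-ordered form $f^{(J',\mathbf{m}')} \otimes f^{(J'',\mathbf{m}'')}$ gives a sum over decompositions $(I,\mathbf{m}) = (J',\mathbf{m}') + (J'',\mathbf{m}'')$, with the binomial product $\prod_{i=1}^n \binom{m_i}{m'_i}$ coming cleanly from the even indices. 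Applying the ordered expression to $v_{k\lambda}\otimes v_{k'\lambda}$ then produces the desired formula up to the Koszul sign.

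The main technical obstacle, and really the only nontrivial ingredient, is the sign $(-1)^{K_{J',J''}}$. Since even factors are central in the super-multiplication sign rule, signs arise only from odd-odd reorderings. Concretely, when regrouping $\prod_s \Delta(f_s^{m_s})$ into the form $\bigl(\text{left stuff}\bigr) \otimes \bigl(\text{right stuff}\bigr)$, each pair consisting of an odd generator routed to the right and a later-multiplied odd generator routed to the left must be super-swapped via $(1 \otimes y)(y' \otimes 1) = -\, y' \otimes y$ for odd $y, y'$. A pair of odd generators $f_{i_j}, f_{i_i}$ with $j < i$ has a fixed relative position in the monomial, and a Koszul swap is incurred exactly when this relative order is reversed in the routing; these are the pairs with $J'_j = 1$ and $J''_i = 1$, which are counted by $K_{J',J''} = \sum_{1 \leq j < i \leq q} J'_j J''_i$.

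To formalize the sign bookkeeping I would use induction on $|I| = \sum_s I_s$, stripping off the outermost odd factor of the product of coproducts and verifying that the new odd generator contributes exactly the new row of terms (those involving $J''_i$ for the freshly-added index $i$) in the double sum defining $K_{J',J''}$; the inductive hypothesis handles the remaining pairs. Alternatively, one can argue by direct combinatorics on the expansion, isolating the even and odd parts: the even subproduct contributes the binomial coefficients as a product over even indices and is orthogonal to the sign computation, while the odd subproduct contributes the inversion sign. Either way, the even and odd contributions separate since super-swaps involving even factors are signless.
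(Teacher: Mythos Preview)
Your argument is correct and arrives at the same formula, but it is organized differently from the paper's proof. The paper proceeds by a direct induction on $|I|+|\mathbf{m}|$, peeling off the outermost factor $f_t^{m_t}$ and splitting into two cases according to the parity of $f_t$; in the odd case it verifies the identity $K_{J',\,J''+e_{t_e}}=K_{J',J''}+|J'|$ to propagate the sign. You instead invoke the Hopf superalgebra coproduct, handle all even factors at once via the classical binomial theorem (since $f_i\otimes 1$ and $1\otimes f_i$ commute for even $f_i$), and then identify the sign as a Koszul inversion count over the odd factors alone. Your packaging is somewhat cleaner: it separates the even contribution (binomial coefficients) from the odd contribution (the sign $(-1)^{K_{J',J''}}$) conceptually rather than interleaving them in an induction, and your observation that swaps with even factors are signless lets you reduce the bookkeeping to inversions in $\{0,1\}^q$. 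The paper's inductive approach, on the other hand, has the virtue of being entirely self-contained and of making the recursive structure of $K_{J',J''}$ explicit, which is used implicitly later when analyzing the structure constants.
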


\begin{proof}
    We are going to prove this via induction on the sum $|I|+|\mathbf{m}|$.
    There is nothing to show for monomials consisting of one even or one odd basis element.
    Write now
    \begin{equation*}
        f^{(I,\mathbf{m})}=f_{n+q}^{m_{n+q}}\cdots f_{1}^{m_{1}}
    \end{equation*}
    and let $1\leq t\leq n+q$ be maximal such that $m_{t}\neq0$.
    We are now going to distinguish between $f_{t}$ being odd or even.

    If $f_{t}$ is even, we first recall the formula for the action of $f_{t}^{m_{t}}$ on arbitrary elementary tensors being
    \begin{equation*}
        f_{t}^{m_{t}}(v\otimes w)=\sum_{r=0}^{m_{t}}\begin{pmatrix}
            m_{t}\\
            r
        \end{pmatrix}
        f^{r}v\otimes f^{m_{t}-r}w
    \end{equation*}
    Now, refer to $1\leq t_e\leq n$ as the index in the multi-index $\mathbf{m}=(m_1,\dots,m_n)$ indicating $m_t$ in $(I,\mathbf{m})$.
    Hence, we also get, where $e_{t}$ stands for the $t$-th standard vector,
    \begin{align*}
        f^{(I,\mathbf{m})}(v_{k\lambda}\otimes v_{k'\lambda})=&f_{t}^{m_{t}}(f^{(I,\mathbf{m}-m_{t}e_{t_{e}})}(v_{k\lambda}\otimes v_{k'\lambda}))\\
        =&f_{t}^{m_{t}}\left(\sum\limits_{(J',\mathbf{m}')+(J'',\mathbf{m}'')=(I,\mathbf{m}-m_{t}e_{t})}(-1)^{K_{J',J''}}\prod\limits_{i=1}^{t_{e}-1}\begin{pmatrix}
            m_i\\
            m'_i
        \end{pmatrix}f^{(J',\mathbf{m}')}v_{k\lambda}\otimes f^{(J'',\mathbf{m}'')}v_{k'\lambda}\right)\\
        =&\sum\limits_{\substack{(J',\mathbf{m}')+(J'',\mathbf{m}'')=(I,\mathbf{m}-m_{t}e_{t_{e}})\\r=0,\dots,m_{t_{e}}}}(-1)^{K_{J',J''}}\begin{pmatrix}
            m_t\\
            r
        \end{pmatrix}\\
        &\cdot\prod\limits_{i=1}^{t_{e}-1}\begin{pmatrix}
            m_i\\
            m'_i
        \end{pmatrix}f^{(J',\mathbf{m}'+re_{t_{e}})}v_{k\lambda}\otimes f^{(J'',\mathbf{m}''+(m_{t}-r)e_{t_{e}})}v_{k'\lambda}
    \end{align*}
    which, after reindexing, is exactly the result needed.

    If $f_{t}$ is odd, we can only have $m_t=1$. 
    Again, referring to $1\leq t_{e}\leq q$ as the index in the multi-index $I=(m_{i_{q}},\dots,m_{i_{1}})$ indicating $m_{t}$ in $(I,\mathbf{m})$ yields us
    \begin{align*}
        f^{(I,\mathbf{m})}(v_{k\lambda}\otimes v_{k'\lambda})=&f_{t}(f^{(I-e_{t_{e}},\mathbf{m})}(v_{k\lambda}\otimes v_{k'\lambda}))\\
        =&f_{t}\left(\sum\limits_{(J',\mathbf{m}')+(J'',\mathbf{m}'')=(I-e_{t_{e}},\mathbf{m})}(-1)^{K_{J',J''}}\prod\limits_{i=1}^{n}\begin{pmatrix}
            m_i\\
            m'_i
        \end{pmatrix}f^{(J',\mathbf{m}')}v_{k\lambda}\otimes f^{(J'',\mathbf{m}'')}v_{k'\lambda}\right)\\
        =&\sum\limits_{(J',\mathbf{m}')+(J'',\mathbf{m}'')=(I-e_{t_{e}},\mathbf{m})}(-1)^{K_{J',J''}}\prod\limits_{i=1}^{n}\begin{pmatrix}
            m_i\\
            m'_i
        \end{pmatrix}f^{(J'+e_{t_{e}},\mathbf{m}')}v_{k\lambda}\otimes f^{(J'',\mathbf{m}'')}v_{k'\lambda}\\
        +&\sum\limits_{(J',\mathbf{m}')+(J'',\mathbf{m}'')=(I-e_{t_{e}},\mathbf{m})}(-1)^{K_{J',J''}+|J'|}\prod\limits_{i=1}^{n}\begin{pmatrix}
            m_i\\
            m'_i
        \end{pmatrix}f^{(J',\mathbf{m}')}v_{k\lambda}\otimes f^{(J''+e_{t_{e}},\mathbf{m}'')}v_{k'\lambda}
    \end{align*}
    and now, we remark that if $J'+J''=I-e_{t_{e}}$, then
    \begin{equation*}
        K_{J',(J''+e_{t_{{e}}})}=\sum^q_{j=1}\sum^q_{i=j+1} J'_j\cdot (J''+e_{t_{e}})_i=\sum^q_{j=1}\sum^{t_{e}-1}_{i=j+1} J'_j\cdot J''_i+\sum^q_{j=1}J'_j=K_{J',J''}+|J'|
    \end{equation*}
    hence proving the claim.
\end{proof}
\begin{remark}
    As opposed to \cite[Proposition 2.11]{Favourable}, we need to be more explicit to be aware of the change of signs. This will be important when defining the monomial superalgebra in \eqref{eq-monomial-generators-ass-graded} into which we want to degenerate.
\end{remark}
\begin{definition}
    Let $(I,\mathbf{m}),\,(I',\mathbf{m}')\in\{0,1\}^q\times\N^n$. We say that they are \textit{compatible} if $(I,\mathbf{m})+(I',\mathbf{m}')\in\{0,1\}^q\times\N^n$.
 \end{definition}
 Being essential has the following property
\begin{proposition}\label{prop-essential-add}
    If $(I,\mathbf{m})$ and $(I',\mathbf{m}')$ are compatible and essential for $K_{\mathfrak{b}}(k\lambda)$ and $K_{\mathfrak{b}}(k'\lambda)$, respectively, then the monomial $(I+I',\mathbf{m}+\mathbf{m}')$ is essential for $K_{\mathfrak{b}}((k+k')\lambda)$
\end{proposition}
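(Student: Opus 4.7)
The plan is to argue by contradiction, transporting the essentialness question into the tensor product $K_{\mathfrak{b}}(k\lambda)\otimes K_{\mathfrak{b}}(k'\lambda)$ via the Cartan map $C_{k,k'}$ of Proposition~\ref{prop-cartan-embedding}. Since $C_{k,k'}$ is a morphism of $\mathfrak{g}$-modules, for any PBW-monomial we have
\begin{equation*}
    C_{k,k'}\bigl(f^{(I+I',\mathbf{m}+\mathbf{m}')}v_{(k+k')\lambda}\bigr) = f^{(I+I',\mathbf{m}+\mathbf{m}')}.(v_{k\lambda}\otimes v_{k'\lambda}),
\end{equation*}
and the right-hand side can be expanded explicitly using Lemma~\ref{lemma-tensor-action-sign}. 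Compatibility of $(I,\mathbf{m})$ and $(I',\mathbf{m}')$ is used here to ensure that the left-hand PBW-monomial is legitimately an ordered monomial in the fixed basis of $\mathfrak{n}^{-}$.

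First, I would establish that in this expansion the elementary tensor $f^{(I,\mathbf{m})}v_{k\lambda}\otimes f^{(I',\mathbf{m}')}v_{k'\lambda}$ occurs with nonzero coefficient once everything is rewritten in terms of essential monomials. By definition, a non-essential vector $f^{(J,\mathbf{n})}v_{k\lambda}$ is a combination of $f^{(J'',\mathbf{n}'')}v_{k\lambda}$ with $(J'',\mathbf{n}'')<(J,\mathbf{n})$; iterating gives an expression in essential monomials all bounded above by $(J,\mathbf{n})$, and analogously on the second factor. Hence the target tensor can arise only from summands $((J',\mathbf{n}'),(J'',\mathbf{n}''))$ in Lemma~\ref{lemma-tensor-action-sign} with $(I,\mathbf{m})\leq(J',\mathbf{n}')$ and $(I',\mathbf{m}')\leq(J'',\mathbf{n}'')$; additivity of the monomial order applied to $(J',\mathbf{n}')+(J'',\mathbf{n}'')=(I,\mathbf{m})+(I',\mathbf{m}')$ forces equality in both components. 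The coefficient coming from this unique summand is $(-1)^{K_{I,I'}}\prod_{i}\binom{m_i+m'_i}{m_i}$, which is nonzero.

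Next, assume for contradiction that $(I+I',\mathbf{m}+\mathbf{m}')$ is not essential for $K_{\mathfrak{b}}((k+k')\lambda)$. Then $f^{(I+I',\mathbf{m}+\mathbf{m}')}v_{(k+k')\lambda}$ lies in the span of $f^{(L,\mathbf{p})}v_{(k+k')\lambda}$ for $(L,\mathbf{p})<(I+I',\mathbf{m}+\mathbf{m}')$. Applying $C_{k,k'}$ and expanding each $f^{(L,\mathbf{p})}.(v_{k\lambda}\otimes v_{k'\lambda})$ by the same lemma, every resulting elementary tensor stems from a pair $(M_1,\mathbf{q}_1)+(M_2,\mathbf{q}_2)=(L,\mathbf{p})$. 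If essential reduction on the two factors were to produce the target tensor, then $(I,\mathbf{m})\leq(M_1,\mathbf{q}_1)$ and $(I',\mathbf{m}')\leq(M_2,\mathbf{q}_2)$, and summing in the additive order yields $(I+I',\mathbf{m}+\mathbf{m}')\leq(L,\mathbf{p})$, contradicting the choice of $(L,\mathbf{p})$. Hence the coefficient of $f^{(I,\mathbf{m})}v_{k\lambda}\otimes f^{(I',\mathbf{m}')}v_{k'\lambda}$ in the image is forced to be zero, contradicting the previous paragraph.

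I expect the main subtlety to be the interplay of the signs $(-1)^{K_{J',J''}}$ from Lemma~\ref{lemma-tensor-action-sign} with the essential-reduction rewrites: one has to be sure that no cancellation on the right-hand side of the equation $C_{k,k'}(\cdot)=\sum c_{L,\mathbf{p}}\,f^{(L,\mathbf{p})}.(v_{k\lambda}\otimes v_{k'\lambda})$ robs the target tensor of its coefficient. This turns out to be painless because only one summand in the whole expansion contributes to that specific tensor, so there is literally nothing to cancel against. Once this observation is in place, the argument reduces cleanly to the additivity of the induced monomial order and mirrors the purely even argument of \cite{Favourable}.
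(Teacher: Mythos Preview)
Your proof is correct and follows essentially the same approach as the paper: both apply the Cartan map $C_{k,k'}$, expand via Lemma~\ref{lemma-tensor-action-sign}, and use additivity of the induced monomial order to isolate the tensor $f^{(I,\mathbf{m})}v_{k\lambda}\otimes f^{(I',\mathbf{m}')}v_{k'\lambda}$ as lying outside the span of the images of smaller monomials. Your write-up is simply more explicit about the essential-reduction step and the nonzero coefficient, while the paper compresses this into the single observation that any decomposition $(U,\mathbf{r})=(U',\mathbf{r}')+(U'',\mathbf{r}'')$ with $(U,\mathbf{r})<(I+I',\mathbf{m}+\mathbf{m}')$ forces one component to be strictly smaller than the corresponding target.
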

\begin{proof}
    We use the Cartan mapping from Proposition \ref{prop-cartan-embedding}. 

    Now, if we have an element $(U,\mathbf{r})<(I,\mathbf{m})$, then it can only be written as a sum $(U,\mathbf{r})=(U',\mathbf{r}')+(U'',\mathbf{r}'')$ with either $(U',\mathbf{r}')<(I,\mathbf{m})$ or $(U'',\mathbf{r}'')<(I',\mathbf{m}')$. But then the span of all of those $f^{(U,\mathbf{r})}(v_{k\lambda}\otimes v_{k'\lambda})$ does not contain the vector $f^{(I,\mathbf{m})}v_{k\lambda}\otimes f^{(I',\mathbf{m}')}v_{k'\lambda}$.
\end{proof}

\begin{definition}
    Denote the set of essential monomials for $K_\mathfrak{b}(k\lambda)$ by $\mathrm{es}(K_{\mathfrak{b}}(k\lambda),<)$. Then the set
    \begin{equation*}
        \Gamma(\lambda,<)=\bigcup\limits_{k\in\N_0}\mathrm{es}(K_{\mathfrak{b}}(k\lambda),<)\times\{k\}
    \end{equation*}
    forms together with an absorbing element $-\infty$ a semigroup where two monomials $(I,\mathbf{m})$ and $(I',\mathbf{m}')$ add up to $-\infty$ if their sum would not lie in $\{0,1\}^{q}\times\N^n$.
\end{definition}
\section{A basis and straightening laws for $R(\lambda)$}\label{sec-basis-straight-laws}
With the algebra $R(\lambda)$ defined, we now want to consider more closely how the structure constants with respect to the multiplication given by \eqref{eq-mult-cartan} behave.
\begin{definition}
    Consider the module $K_{\mathfrak{b}}(k\lambda)$ for $k\in\N$ and the basis of essential vectors $f^{(I,\mathbf{m})}v_{k\lambda}$ for $(I,\mathbf{m})\in\mathrm{es}(K_{\mathfrak{b}}(k\lambda),<)$. Then we refer to $\eta_{I,\mathbf{m},k}$ as the dual vector of $f^{(I,\mathbf{m})}v_{k\lambda}$ which then form a basis of $K_{\mathfrak{b}}(k\lambda)^{*}$.
\end{definition}
\begin{lemma}
    Let $(I',\mathbf{m}')$ be a multi-exponent and $(I,\mathbf{m})\in\mathrm{es}(K_{\mathfrak{b}}(k\lambda),<)$. Then we have $\eta_{I,\mathbf{m},k}(f^{(I',\mathbf{m}')}v_{k\lambda})=0$ if $(I',\mathbf{m}')<(I,\mathbf{m})$.
\end{lemma}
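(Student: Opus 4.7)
The plan is to prove this by establishing a slightly stronger intermediate claim: for any multi-exponent $(I',\mathbf{m}')$, the vector $f^{(I',\mathbf{m}')}v_{k\lambda}$ can be expanded as a linear combination of essential vectors $f^{(J,\mathbf{n})}v_{k\lambda}$ with $(J,\mathbf{n})\leq (I',\mathbf{m}')$. Once this is shown, the lemma follows immediately: since $(I',\mathbf{m}')<(I,\mathbf{m})$, every essential index $(J,\mathbf{n})$ appearing in the expansion satisfies $(J,\mathbf{n})<(I,\mathbf{m})$, so in particular $(J,\mathbf{n})\neq (I,\mathbf{m})$, and therefore $\eta_{I,\mathbf{m},k}$ vanishes on the entire expansion by duality.

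To prove the intermediate claim, I would argue by induction on $(I',\mathbf{m}')$ with respect to the monomial order $<$. Since $<$ is a monomial order on $\N^n\times\{0,1\}^q$ (induced from a monomial order on $\N^n\times\N^q$), it is well-founded on the set of multi-exponents, so transfinite induction reduces to ordinary descent. If $(I',\mathbf{m}')$ is essential, then $f^{(I',\mathbf{m}')}v_{k\lambda}$ is itself a basis vector and the statement holds trivially with $(J,\mathbf{n})=(I',\mathbf{m}')$. Otherwise, by the very definition of essentiality we have
\begin{equation*}
    f^{(I',\mathbf{m}')}v_{k\lambda}\in \mathrm{span}\langle f^{(I'',\mathbf{m}'')}v_{k\lambda}\mid (I'',\mathbf{m}'')<(I',\mathbf{m}')\rangle,
\end{equation*}
and by the inductive hypothesis each $f^{(I'',\mathbf{m}'')}v_{k\lambda}$ appearing on the right is a combination of essential vectors $f^{(J,\mathbf{n})}v_{k\lambda}$ with $(J,\mathbf{n})\leq (I'',\mathbf{m}'')<(I',\mathbf{m}')$. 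Combining, $f^{(I',\mathbf{m}')}v_{k\lambda}$ is a combination of essential vectors with indices strictly below $(I',\mathbf{m}')$, which is what is required.

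The only subtle point is the well-foundedness of the descent, but this is standard: a monomial order has $0$ as its minimum and is compatible with addition, which forces it to be a well-order (e.g.\ by Dickson's lemma applied to the underlying monomial order on $\N^n\times\N^q$). No supergeometric delicacy enters this argument, because the signs established in Lemma \ref{lemma-tensor-action-sign} only affect scalar coefficients, not the index set of monomials involved. I expect the whole proof to be quite short, and the main (minor) obstacle is simply to phrase the induction so that it does not implicitly assume finiteness of the set of monomials below $(I',\mathbf{m}')$, which would be false in $\{0,1\}^q\times \N^n$; invoking well-foundedness of the monomial order is the clean way around this.
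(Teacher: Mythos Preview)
Your proof is correct and follows essentially the same approach as the paper: split into the cases where $(I',\mathbf{m}')$ is essential (handled by duality) or not (descend via the definition of essentiality to smaller multi-exponents and repeat). The paper compresses the descent into the single phrase ``of which we can again assume these $(I'',\mathbf{m}'')$ to be essential again,'' whereas you spell out the induction and the well-foundedness of the monomial order; this extra care is welcome but does not change the argument.
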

\begin{proof}
    If $(I',\mathbf{m}')$ is itself essential, then it is zero by the definition of the dual basis. Otherwise, i.e. it is not essential, $f^{(I',\mathbf{m}')}v_{k\lambda}$ lies in the span of $f^{(I'',\mathbf{m}'')}v_{k\lambda}$ with $(I'',\mathbf{m}'')<(I,\mathbf{m})$ of which we can again assume these $(I'',\mathbf{m}'')$ to be essential again. Applying to such a linear combination again $\eta_{I,\mathbf{m},k}$ yields us zero.
\end{proof}

Now, consider the multiplication of two dual basis vectors in $R(\lambda)$ corresponding to essential monomials
\begin{equation*}
    \eta_{I,\mathbf{m},k_{1}}\eta_{I',\mathbf{m}',k_{2}}=\sum\limits_{(I'',\mathbf{m}'')\in\mathrm{es}(K_{\mathfrak{b}}((k_{1}+k_{2})\lambda))}c^{(I'',\mathbf{m}'',k_{1}+k_{2})}_{(I,\mathbf{m},k_{1}),(I',\mathbf{m}',k_{2})}\eta_{I'',\mathbf{m}'',k_{1}+k_{2}}
\end{equation*}
with coefficients $c^{(I'',\mathbf{m}'',k_{1}+k_{2})}_{(I,\mathbf{m},k_{1}),(I',\mathbf{m}',k_{2})}\in\mathbb{C}$.

Then we have the following result
\begin{proposition}\label{prop-mult-straight}
    The coefficient $c^{(I'',\mathbf{m}'',k_{1}+k_{2})}_{(I,\mathbf{m},k_{1}),(I',\mathbf{m}',k_{2})}\in\mathbb{C}$ vanishes for $(I'',\mathbf{m}'')<(I,\mathbf{m})+(I',\mathbf{m}')$ and is non-zero for $(I'',\mathbf{m}'')=(I,\mathbf{m})+(I',\mathbf{m}')$
\end{proposition}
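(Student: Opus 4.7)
The plan is to evaluate the product $\eta_{I,\mathbf{m},k_{1}}\cdot\eta_{I',\mathbf{m}',k_{2}}$ directly on the basis vectors $f^{(I'',\mathbf{m}'')}v_{(k_{1}+k_{2})\lambda}$ for $(I'',\mathbf{m}'')\in\mathrm{es}(K_{\mathfrak{b}}((k_{1}+k_{2})\lambda),<)$. Because $\{\eta_{I'',\mathbf{m}'',k_{1}+k_{2}}\}$ is the basis dual to these vectors, the resulting number is exactly the coefficient $c^{(I'',\mathbf{m}'',k_{1}+k_{2})}_{(I,\mathbf{m},k_{1}),(I',\mathbf{m}',k_{2})}$. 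Using the explicit description of the multiplication in Remark \ref{rem-cartan-mult-well-defined} together with the Cartan map $C_{k_{2},k_{1}}$ of Proposition \ref{prop-cartan-embedding}, this evaluation becomes
\[
\gamma\bigl(\eta_{I,\mathbf{m},k_{1}}\otimes\eta_{I',\mathbf{m}',k_{2}}\bigr)\bigl(f^{(I'',\mathbf{m}'')}(v_{k_{2}\lambda}\otimes v_{k_{1}\lambda})\bigr).
\]

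Next I would apply Lemma \ref{lemma-tensor-action-sign} to expand $f^{(I'',\mathbf{m}'')}(v_{k_{2}\lambda}\otimes v_{k_{1}\lambda})$ as a signed sum indexed by decompositions $(J',\mathbf{n}')+(J'',\mathbf{n}'')=(I'',\mathbf{m}'')$, and then pair this sum with $\gamma(\eta_{I,\mathbf{m},k_{1}}\otimes\eta_{I',\mathbf{m}',k_{2}})$. Term by term this produces a sign times a product of binomial coefficients times the pairings $\eta_{I',\mathbf{m}',k_{2}}(f^{(J',\mathbf{n}')}v_{k_{2}\lambda})$ and $\eta_{I,\mathbf{m},k_{1}}(f^{(J'',\mathbf{n}'')}v_{k_{1}\lambda})$. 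By the lemma immediately preceding the proposition, these pairings vanish whenever $(J',\mathbf{n}')<(I',\mathbf{m}')$ or $(J'',\mathbf{n}'')<(I,\mathbf{m})$.

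Both claims then follow from the additivity of the monomial order. If $(I'',\mathbf{m}'')<(I,\mathbf{m})+(I',\mathbf{m}')$, no decomposition can simultaneously satisfy $(J',\mathbf{n}')\geq(I',\mathbf{m}')$ and $(J'',\mathbf{n}'')\geq(I,\mathbf{m})$, since summing the two inequalities using additivity would force $(I'',\mathbf{m}'')\geq(I,\mathbf{m})+(I',\mathbf{m}')$; so every summand vanishes. If $(I'',\mathbf{m}'')=(I,\mathbf{m})+(I',\mathbf{m}')$, the same additivity argument isolates the unique surviving decomposition $(J',\mathbf{n}')=(I',\mathbf{m}')$, $(J'',\mathbf{n}'')=(I,\mathbf{m})$, whose contribution is a single sign times the strictly positive integer $\prod_{i=1}^{n}\binom{m_{i}+m'_{i}}{m'_{i}}$ times $1\cdot 1$ from the two dual pairings, and hence is non-zero.

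The main obstacle is the careful bookkeeping forced on us by the supercommutative setup: one has to trace the transposition inside the map $\gamma$ so that $\eta_{I',\mathbf{m}',k_{2}}$ really is paired with the $K_{\mathfrak{b}}(k_{2}\lambda)$-factor produced by the Cartan map, and one has to address compatibility. If $(I,\mathbf{m})$ and $(I',\mathbf{m}')$ are incompatible then $(I,\mathbf{m})+(I',\mathbf{m}')$ falls outside $\{0,1\}^{q}\times\N^{n}$ and the equality statement is vacuous; when they are compatible, Proposition \ref{prop-essential-add} guarantees $(I,\mathbf{m})+(I',\mathbf{m}')\in\mathrm{es}(K_{\mathfrak{b}}((k_{1}+k_{2})\lambda),<)$, so the dual basis vector $\eta_{(I,\mathbf{m})+(I',\mathbf{m}'),k_{1}+k_{2}}$ is defined and the evaluation-equals-coefficient correspondence makes sense.
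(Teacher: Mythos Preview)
Your proposal is correct and follows essentially the same route as the paper: you evaluate the coefficient via Remark \ref{rem-cartan-mult-well-defined} and Lemma \ref{lemma-tensor-action-sign}, invoke the preceding lemma to kill all summands with $(J',\mathbf{n}')<(I',\mathbf{m}')$ or $(J'',\mathbf{n}'')<(I,\mathbf{m})$, and then use additivity of the induced order to handle both the vanishing and the non-vanishing claim, isolating the single surviving term $(-1)^{K_{I',I}}\prod_i\binom{m_i+m'_i}{m'_i}$ at equality. Your extra remarks on compatibility and on tracing the transposition in $\gamma$ are welcome bookkeeping that the paper leaves implicit.
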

\begin{proof}
    We consider the formula from Lemma \ref{lemma-tensor-action-sign}. With Remark \ref{rem-cartan-mult-well-defined} in mind, we have for the coefficient in general
    \begin{align*}
        c^{(I'',\mathbf{m}'',k_{1}+k_{2})}_{(I,\mathbf{m},k_{1}),(I',\mathbf{m}',k_{2})}=&\sum\limits_{(J',\mathbf{n}')+(J'',\mathbf{n}'')=(I'',\mathbf{m}'')}(-1)^{K_{J',J''}}\prod\limits_{i=1}^{n}\begin{pmatrix}
            m''_i\\
            n'_i
        \end{pmatrix}\\
        \cdot&\eta_{I,\mathbf{m},k_{1}}(f^{(J'',\mathbf{n}'')}v_{k_{1}\lambda})\cdot\eta_{I',\mathbf{m}',k_{2}} (f^{(J',\mathbf{n}')}v_{k_{2}\lambda})
    \end{align*}
    Now note that for $(J',\mathbf{n}')+(J'',\mathbf{n}'')<(I,\mathbf{m})+(I',\mathbf{m}')$ either $(J'',\mathbf{n}'')<(I,\mathbf{m})$ or $(J',\mathbf{n}')<(I',\mathbf{m}')$ and hence that all summands would equal zero.

    In the case of $(I'',\mathbf{m}'')=(I,\mathbf{m})+(I',\mathbf{m}')$, we again need that $(J'',\mathbf{n}'')=(I,\mathbf{m})$ as well as $(J',\mathbf{n}')=(I',\mathbf{m}')$ as otherwise we would need that either $(J'',\mathbf{n}'')<(I,\mathbf{m})$ or $(J',\mathbf{n}')<(I',\mathbf{m}')$.

    Hence, the coefficient $c^{(I'',\mathbf{m}'',k_{1}+k_{2})}_{(I,\mathbf{m},k_{1}),(I',\mathbf{m}',k_{2})}$ is equal to
    \begin{equation*}
        (-1)^{K_{I',I}}\prod\limits_{i=1}^{n}\begin{pmatrix}
            m_i+m'_i\\
            m'_i
        \end{pmatrix}=(-1)^{K_{I',I}}\prod\limits_{i=1}^{n}\begin{pmatrix}
            m_i+m'_i\\
            m_i
        \end{pmatrix},
    \end{equation*}
    which is non-zero.
\end{proof}
\begin{remark}
Note that we have the equality
\begin{equation*}
    K_{I,J}+K_{J,I}=|I|\cdot|J|
\end{equation*}
which in particular implies that
\begin{equation*}
    (-1)^{K_{I,J}}=(-1)^{K_{J,I}}
\end{equation*}
if $|I|$ or $|J|$ is even and
\begin{equation*}
    (-1)^{K_{I,J}}=(-1)\cdot(-1)^{K_{J,I}}
\end{equation*}
if both $|I|$ and $|J|$ are odd.
\end{remark}
\begin{lemma}\label{lemma-structure-const-sign}
    The basis vectors can be refactored such that $c^{(I+I',\mathbf{m+}\mathbf{m}',k_{1}+k_{2})}_{(I,\mathbf{m},k_{1}),(I',\mathbf{m}',k_{2})}=(-1)^{K_{I',I}}$
\end{lemma}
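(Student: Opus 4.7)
The strategy is to view this as a simple rescaling of the dual basis. By Proposition \ref{prop-mult-straight}, when we expand $\eta_{I,\mathbf{m},k_1}\eta_{I',\mathbf{m}',k_2}$ in the $\eta$-basis, all terms of $\deg<(I,\mathbf{m})+(I',\mathbf{m}')$ vanish and the coefficient at the minimal surviving term equals
\[
(-1)^{K_{I',I}}\prod_{i=1}^{n}\binom{m_i+m'_i}{m_i}.
\]
Thus to achieve the claim it suffices to find scalars $\lambda_{I,\mathbf{m},k}\in\C^{\times}$ such that the rescaled dual vectors $\tilde\eta_{I,\mathbf{m},k}:=\lambda_{I,\mathbf{m},k}\eta_{I,\mathbf{m},k}$ satisfy
\[
\lambda_{I,\mathbf{m},k_{1}}\,\lambda_{I',\mathbf{m}',k_{2}}\prod_{i=1}^{n}\binom{m_i+m'_i}{m_i}=\lambda_{I+I',\mathbf{m}+\mathbf{m}',k_{1}+k_{2}},
\]
since then the new leading structure constant is precisely $(-1)^{K_{I',I}}$.

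The natural candidate, familiar from the divided-power formalism, is $\lambda_{I,\mathbf{m},k}:=\prod_{i=1}^{n}m_i!$, depending only on the even multi-exponent $\mathbf{m}$. Plugging this in, the identity to verify reduces to
\[
\prod_{i=1}^{n}m_i!\,m'_i!\cdot\binom{m_i+m'_i}{m_i}=\prod_{i=1}^{n}(m_i+m'_i)!,
\]
which is immediate from $\binom{a+b}{a}=\frac{(a+b)!}{a!\,b!}$. Each factor $\prod_i m_i!$ is nonzero, so the $\tilde\eta_{I,\mathbf{m},k}$ still form a basis of $K_{\mathfrak{b}}(k\lambda)^{*}$, and since the rescaling only multiplies each coefficient in the expansion by a nonzero scalar, the vanishing statement of Proposition \ref{prop-mult-straight} is preserved verbatim.

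There is essentially no obstacle here: the sign $(-1)^{K_{I',I}}$ is already produced by Proposition \ref{prop-mult-straight} and is unaffected by a rescaling with positive rationals, and the odd multi-index $I$ plays no role because the only combinatorial factor that needs absorbing comes from the binomial coefficients on the even generators (odd generators appear with exponent $0$ or $1$, so $\binom{m''_i}{m'_i}=1$ on that side by Lemma \ref{lemma-tensor-action-sign}). The only thing worth writing out explicitly in the final proof is the short check that $\prod_i m_i!\cdot\prod_i m'_i!\cdot\prod_i\binom{m_i+m'_i}{m_i}=\prod_i(m_i+m'_i)!$, after which the claim follows immediately.
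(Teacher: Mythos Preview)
Your proposal is correct and is essentially the paper's own argument: the paper rescales the primal basis by $\prod_i \frac{1}{m_i!}$ (divided powers), which on the dual side is exactly your rescaling $\tilde\eta_{I,\mathbf{m},k}=\bigl(\prod_i m_i!\bigr)\eta_{I,\mathbf{m},k}$, and then observes that the binomial coefficients cancel, leaving only $(-1)^{K_{I',I}}$. Your remark that the odd exponents contribute trivially because $m_{i_t}!=1$ matches the paper's closing note verbatim.
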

\begin{proof}
    If we apply the renormalization
    \begin{equation*}
        f^{(I,\mathbf{m})}v_{k\lambda}\mapsto\prod\limits_{i=1}^{n+q}\frac{1}{m_i!}f^{(I,\mathbf{m})}v_{k\lambda}
    \end{equation*}
    (and thus also to $\eta_{I,\mathbf{m},k}$)
    then the binomial coefficients cancel, and we only end up with the coefficient $(-1)^{K_{I',I}}$.

    Note that for $i_{t}$ corresponding to an odd basis element, we have $m_{i_{t}}!=1$.
\end{proof}
\section{Favourable modules and and a vanishing ideal of $R(\lambda)$}\label{sec-favourable}
In this section, we want to introduce the notion of the highest module being favourable, similar to the notion in \cite{Favourable}. Further, we will retrieve an ideal by which the quotient is then isomorphic to $R(\lambda)$.
\begin{definition}
    We call the highest weight module $K_{\mathfrak{b}}(\lambda)$ \textit{favourable} if every essential monomial of $K_{\mathfrak{b}}(k\lambda)$ can be written as a sum of $k$ essential monomials of $K_{\mathfrak{b}}(\lambda)$.
\end{definition}

From now on, we assume $K_{\mathfrak{b}}(\lambda)$ to be favourable. 

We put a filtration on $R(\lambda)$. 
First, set for a multi-exponent $(I,\mathbf{m})\in\{0,1\}^{q}\times\N^n$ the subspaces
\begin{equation*}
    R_{k}(\lambda)^{>(I,\mathbf{m})}=\bigoplus_{\substack{(I',\mathbf{m}')\in\mathrm{es}(K_{\mathfrak{b}}(k\lambda),<)\\(I',\mathbf{m}')>(I,\mathbf{m})}}\langle\eta_{I',\mathbf{m}',k}\rangle_{\mathbb{C}}
\end{equation*}
as well as
\begin{equation*}
    R_{k}(\lambda)^{\geq(I,\mathbf{m})}=\bigoplus_{\substack{(I',\mathbf{m}')\in\mathrm{es}(K_{\mathfrak{b}}(k\lambda),<)\\(I',\mathbf{m}')\geq(I,\mathbf{m})}}\langle\eta_{I',\mathbf{m}',k}\rangle_{\mathbb{C}}
\end{equation*}
and then the subspaces of $R(\lambda)$
\begin{equation*}
    R(\lambda)^{>(I,\mathbf{m})}=\bigoplus_{k\in\N_0}\bigoplus_{\substack{(I',\mathbf{m}')\in\mathrm{es}(K_{\mathfrak{b}}(k\lambda),<)\\(I',\mathbf{m}')>(I,\mathbf{m})}}\langle\eta_{I',\mathbf{m}',k}\rangle_{\mathbb{C}}=\bigoplus_{k\in\N_0}R_{k}(\lambda)^{>(I,\mathbf{m})}
\end{equation*}
and 
\begin{equation*}
    R(\lambda)^{\geq(I,\mathbf{m})}=\bigoplus_{k\in\N_0}\bigoplus_{\substack{(I',\mathbf{m}')\in\mathrm{es}(K_{\mathfrak{b}}(k\lambda),<)\\(I',\mathbf{m}')\geq(I,\mathbf{m})}}\langle\eta_{I',\mathbf{m}',k}\rangle_{\mathbb{C}}=\bigoplus_{k\in\N_0}R_{k}(\lambda)^{\geq(I,\mathbf{m})}
\end{equation*}
Now, with Proposition \ref{prop-mult-straight}, we can see that both $R(\lambda)^{>(I,\mathbf{m})}$ and $R(\lambda)^{\geq(I,\mathbf{m})}$ are ideals of $R(\lambda)$.

Hence, with this filtration of ideals, we can consider the associated graded superalgebra
\begin{equation*}
    \mathrm{gr}(R(\lambda))=\bigoplus_{k\in\N_0}\bigoplus_{(I,\mathbf{m})\in\mathrm{es}(K_{\mathfrak{b}}(k\lambda),<)}\left(R_{k}(\lambda)^{\geq(I,\mathbf{m})}/R_{k}(\lambda)^{>(I,\mathbf{m})}\right)
\end{equation*}
We see that this superalgebra is generated by the monomials $\overline{\eta}_{I,\mathbf{m},1}$ for $(I,\mathbf{m})\in\mathrm{es}(K_{\mathfrak{b}}(\lambda),<)$ as we assume that $K_{\mathfrak{b}}(\lambda)$ is favourable.

Now, we want to identify the associated graded algebra with a subalgebra of $\mathbb{C}[v,x_1,\dots,x_n,\xi_1,\dots,\xi_q]$ generated by monomials.

However, we need to put in some care to account for the correct signs.

If we set for an essential monomial $((I,\mathbf{m}),k)=((i_1,\dots,i_q),\mathbf{m},k)\in\Gamma(\lambda,<)$, the monomial
\begin{equation}\label{eq-monomial-generators-ass-graded}
    \xi^{I}x^{\mathbf{m}}v^{k}:=\xi_{q}^{i_{q}}\cdots\xi_{1}^{i_{1}}x^{\mathbf{m}}v^{k},
\end{equation}
then one can verify that indeed
\begin{equation*}
    \mathrm{gr}(R(\lambda))\cong\mathbb{C}[\xi^{I}x^{\mathbf{m}}v|(I,\mathbf{m})\in\mathrm{es}(K_{\mathfrak{b}}(\lambda),<)],
\end{equation*}

Now, let $(I_1,\mathbf{m}_1),\dots,(I_t,\mathbf{m}_t)$ be the essential monomials in $\mathrm{es}(K_{\mathfrak{b}}(\lambda),<)$ with $|I_{i}|$ being even for $1\leq i\leq r$ and the remaining ones odd.
We go ahead and consider the surjection

\begin{equation}\label{eq-surjection-onto-assoc-graded}
    \begin{aligned}
        \mathbb{C}[x_1,\dots,x_r,\xi_1,\dots,\xi_s]&\rightarrow \mathrm{gr}(R(\lambda))\\
        x_i&\mapsto \overline{\eta}_{I_i,\mathbf{m}_i,1}\\
        \xi_j&\mapsto \overline{\eta}_{I_j+r,\mathbf{m}_j+r,1}
    \end{aligned}
\end{equation}

Thus, we can indeed put a grading of $\Gamma(\lambda,<)$ on $\mathbb{C}[x_1,\dots,x_r,\xi_1,\dots,\xi_s]$. 
We put the monomials sent to $0$ in the $-\infty$ component. 
This happens if and only if the sum of the multi-exponents would not lie in , which can also be seen from the identification in \eqref{eq-monomial-generators-ass-graded}.

Every other non-zero monomial $x_{i_{1}}\cdots x_{i_{q}}\xi_{j_{1}}\cdots\xi_{j_{p}}$ belongs to the component $(I,\mathbf{m},p+q)$ where
\begin{align*}
    I&=I_{i_{1}}+\cdots +I_{i_{q}}+I_{r+j_{1}}+\cdots+I_{r+j_{p}}\\
    \mathbf{m}&=\mathbf{m}_{i_{1}}+\cdots +\mathbf{m}_{i_{q}}+\mathbf{m}_{r+j_{1}}+\cdots+\mathbf{m}_{r+j_{p}}.
\end{align*}
By excluding that the monomial would be sent to $0$, we know that the sum is well-defined.

In particular, the map \eqref{eq-surjection-onto-assoc-graded} is a $\Gamma(\lambda,<)$-graded map. Hence, the ideal is generated by sums of monomials and individual monomials of the form
\begin{equation}\label{eq-relations-degenerate}
    x_{i_{1}}\cdots x_{i_{r}}\xi_{j_{1}}\cdots\xi_{j_{h-r}}+(-1)^{\sigma}x_{i'_{1}}\cdots x_{i'_{s}}\xi_{j'_{1}}\cdots\xi_{j'_{h-s}}, \hspace{1em}x_{i''_{1}}\cdots x_{i''_{t}}\xi_{j_{1}}\cdots\xi_{j''_{h-t}}
\end{equation}
with $\sigma\in\{0,1\}$. Note that both products in the binomial on the left-hand-side have the same total degree as they would need to map to the same monomial in $\mathrm{gr}(R(\lambda))$.

Now, we are interested in realizing relations for $R(\lambda)$.

For this, consider again generators of the ideal from \eqref{eq-relations-degenerate}, call them $\overline{g}_{1},\dots,\overline{g}_{m}$. First of all, we have that
\begin{equation*}
    \overline{g}_{k}(\overline{\eta}_{I_1,\mathbf{m}_1,1},\dots,\overline{\eta}_{I_t,\mathbf{m}_t,1})=0,\hspace{1em}\text{ for }1\leq k\leq m
\end{equation*}
Again, we have to distinguish between two different generator cases.

Firstly, suppose $\overline{g}_{k}$ is only a monomial. In that case, this can only happen if the sum of the multi-exponents would not be well-defined, i.e. to say there are two exponents $(I_1,\mathbf{m}_1)$ and $(I_2,\mathbf{m}_2)$ that are not compatble and their the sum $(I_1+I_2,\mathbf{m}_1+\mathbf{m}_2)$ would not lie in $\{0,1\}^{q}\times\N^n$. 
But, if we remember the structure constants from lemma \ref{lemma-structure-const-sign}, we see that this yields us in $R(\lambda)$
\begin{equation*}
    \overline{g}_{k}(\eta_{I_1,\mathbf{m}_1,1},\dots,\eta_{I_t,\mathbf{m}_t,1})=0
\end{equation*}

Now, we consider the case where $\overline{g}_{k}$ is a binomial and fix the multi-exponent being mapped to as $(J_{k},\mathbf{n}_{k},h_{k})$.

Then we know that $\overline{g}_{k}(\eta_{I_1,\mathbf{m}_{1},1},\dots,\eta_{I_t,\mathbf{m}_{t},1})\in R_{h_{k}}(K_{\mathfrak{b}}(\lambda))^{>(J_{k},\mathbf{n}_{k})}$, i.e. via Proposition \ref{prop-mult-straight}
\begin{equation*}
    \overline{g}_{k}(\eta_{I_1,\mathbf{m}_1,1},\dots,\eta_{I_t,\mathbf{m}_t,1})=\sum_{\substack{(J',\mathbf{n}')\in\mathrm{es}(K_{\mathfrak{b}}(h_{k}\lambda),<)\\(J',\mathbf{n}')>(J,\mathbf{n})}}a_{(J',\mathbf{n}')}\eta_{J',\mathbf{n}',h_{k}}
\end{equation*}
for some coefficients $a_{(J',n')}\in\C$.

Now, we know that again from proposition \ref{prop-essential-add} that $\eta_{J',n',h_{k}}$ can be expressed as a product of total degree $h_{k}$ in the essential monomials $(I_1,\mathbf{m}_1),\dots,(I_t,\mathbf{m}_t)$ plus a linear combination of vectors $\eta_{J'',\mathbf{n}'',h_{k}}$ with $(J'',\mathbf{n}'')>(J',\mathbf{n}')$ and $(J'',\mathbf{n}'')\in\mathrm{es}(K_{\mathfrak{b}}(h\lambda),<)$. 
However, since $K_{\mathfrak{b}}(h_{k}\lambda)$ is finite-dimensional, we will end up with a finite sum and hence an element of $\C[x_1,\dots,x_r,\xi_1,\dots,\xi_s]$ being
\begin{equation*}
    g_{k}=\overline{g}_{k}+\sum\limits_{j=1}^{r_{k}}g_{k,j}
\end{equation*}
where $g_{k,j}$ are polynomials in $\C[x_1,\dots,x_n,\xi_1,\dots,\xi_q]$ of which all monomials belong to the same $\Gamma(\lambda,<)$-component of some $(U_{k,j},\mathbf{v}_{k,j},h_{k})$ where $(U_{k,j},\mathbf{v}_{k,j})>(J_{k},\mathbf{n}_{k})$ as well as
\begin{equation*}
    g_{k}(\eta_{I_1,\mathbf{m}_1,1},\dots,\eta_{I_t,\mathbf{m}_t,1})=0
\end{equation*}

Consider now the surjective homomorphism of superalgebras
\begin{equation}\label{eq-surjection-onto-flag}
    \begin{aligned}
        \Phi:\mathbb{C}[x_1,\dots,x_r,\xi_1,\dots,\xi_s]/\langle g_1,\dots,g_t\rangle&\rightarrow R(\lambda),\\
        x_i&\mapsto \eta_{I_i,\mathbf{m}_i,1},\\
        \xi_j&\mapsto \eta_{I_j+r,\mathbf{m}_j+r,1}.
    \end{aligned}
\end{equation}
\begin{proposition}\label{prop-iso-flag}
    The map $\Phi$ from \eqref{eq-surjection-onto-flag} is an isomorphism.
\end{proposition}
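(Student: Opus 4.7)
The plan is to establish injectivity; surjectivity is immediate, since favourability means $R(\lambda)$ is generated in degree one, and the degree-one essentials $\eta_{I_i,\mathbf{m}_i,1}$ and $\eta_{I_{j+r},\mathbf{m}_{j+r},1}$ are exactly the images of $x_i$ and $\xi_j$. The approach is a standard filtered-to-associated-graded comparison, leveraging the fact that the graded version of this map is already known to coincide with the surjection \eqref{eq-surjection-onto-assoc-graded} onto $\mathrm{gr}(R(\lambda))$.

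First I would install the $\Gamma(\lambda,<)$-filtration on both source and target: on $R(\lambda)$ by the ideals $R(\lambda)^{\geq(I,\mathbf{m})}$ from Section \ref{sec-favourable}, and on $\C[x_1,\dots,x_r,\xi_1,\dots,\xi_s]$ by assigning each monomial its $\Gamma$-component, with the convention that monomials sent to $-\infty$ already lie in the kernel of the map to $R(\lambda)$. I would then verify that $\Phi$ preserves these filtrations and that its associated graded coincides with the map \eqref{eq-surjection-onto-assoc-graded}. The key input is the non-vanishing of the leading structure constant $(-1)^{K_{I',I}}$ from Lemma \ref{lemma-structure-const-sign}, which guarantees that a product of degree-one dual generators lands in the expected $\Gamma$-component with a non-zero coefficient. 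By construction, each correction $g_k-\overline{g}_k$ sits in strictly higher $\Gamma$-components than the leading term $\overline{g}_k$, so the associated graded of the ideal $\langle g_1,\dots,g_t\rangle$ coincides with the ideal $\langle\overline{g}_1,\dots,\overline{g}_m\rangle$ cutting out $\mathrm{gr}(R(\lambda))$. Hence $\mathrm{gr}(\Phi)$ descends to an isomorphism onto $\mathrm{gr}(R(\lambda))$.

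Finally, the standard filtered-graded lemma yields injectivity of $\Phi$: if $f$ were a nonzero class in the quotient with $\Phi(f)=0$, then its leading $\Gamma$-component would be a nonzero element of the kernel of $\mathrm{gr}(\Phi)$, contradicting the preceding paragraph. Termination of the leading-term argument is guaranteed by the finite-dimensionality of each $K_{\mathfrak{b}}(k\lambda)$, which makes the $\Gamma$-filtration on each fixed total-degree piece have only finitely many levels. The main obstacle I anticipate is verifying that the chosen list $g_1,\dots,g_t$ is genuinely sufficient, i.e. that lifting a set of generators of the ideal defining $\mathrm{gr}(R(\lambda))$ to the correction-modified $g_k$'s produces the full defining ideal of $R(\lambda)$ with no hidden extra relations. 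This closure follows once one notes, again using favourability together with the finite-dimensionality of each $K_{\mathfrak{b}}(k\lambda)$, that any putative extra relation in $R(\lambda)$ would itself have a $\Gamma$-leading term lying in $\langle \overline{g}_1,\dots,\overline{g}_m\rangle$ and hence could be reduced modulo the $g_k$'s — an induction on the $\Gamma$-component, well-founded because each degree-$k$ component has only finitely many $\Gamma$-levels, then finishes the argument.
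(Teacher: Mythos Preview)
Your approach is correct and essentially the same as the paper's: both put the $\Gamma(\lambda,<)$-filtration on source and target and deduce injectivity of $\Phi$ from injectivity of $\mathrm{gr}\,\Phi$. The only difference in packaging is that where you run an explicit induction on $\Gamma$-components to rule out ``hidden extra relations'', the paper sidesteps this by noting that the composite
\[
\mathcal{S}/\langle\overline{g}_k\rangle \twoheadrightarrow \mathrm{gr}\bigl(\mathcal{S}/\langle g_k\rangle\bigr) \xrightarrow{\ \mathrm{gr}\,\Phi\ } \mathrm{gr}\,R(\lambda)
\]
is an isomorphism (the first surjection exists because $g_k$ and $\overline{g}_k$ agree modulo higher $\Gamma$-components), which forces $\mathrm{gr}\,\Phi$ to be injective without ever computing $\mathrm{gr}\langle g_k\rangle$ directly.
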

\begin{proof}
    We are going to put a decreasing filtration compatible with $\Phi$ onto the space of the left-hand-side of \eqref{eq-surjection-onto-flag} and show that the associated graded map $\mathrm{gr}\,\Phi$ is injective.

    To simplify notation, we define $\mathcal{S}:=\mathbb{C}[x_1,\dots,x_r,\xi_1,\dots,\xi_s]$.

    For a multi-exponent $(I,\mathbf{m})\in\{0,1\}^{q}\times\N^n$, we define $\mathcal{S}_{n}^{\geq(I,\mathbf{m})}$ as the span of all monomials in $x_{i_{1}}\cdots x_{i_{q}}\xi_{j_{1}}\cdots\xi_{j_{p}}$ of total degree $n$ and $(K,\mathbf{f})\geq(I,\mathbf{m})$
    where
    \begin{align*}
        K&=I_{i_{1}}+\cdots +I_{i_{q}}+I_{r+j_{1}}+\cdots+I_{r+j_{p}}\\
        \mathbf{f}&=\mathbf{m}_{i_{1}}+\cdots +\mathbf{m}_{i_{q}}+\mathbf{m}_{r+j_{1}}+\cdots+\mathbf{m}_{r+j_{p}},
    \end{align*}
    if the sum lies in $\{0,1\}^{q}\times\N^n$.
    Then the space
    \begin{equation*}
        \mathcal{S}^{\geq(I,\mathbf{m})}=\bigoplus_{k\in\N_{0}}\mathcal{S}_{k}^{\geq(I,\mathbf{m})}
    \end{equation*}
    is an ideal.

    For $-\infty$ we define the space $\mathcal{S}_{n}^{\geq-\infty}$ as the span of all monomials $x_{i_{1}}\cdots x_{i_{q}}\xi_{j_{1}}\cdots\xi_{j_{p}}$ of total degree $n$ whose sum of respective essential monomials would sum up to $-\infty$ in $\Gamma(\lambda,<)$.

    Of course, we have a projection from $\mathcal{S}$ onto $A:=\mathbb{C}[x_1,\dots,x_r,\xi_1,\dots,\xi_s]/\langle g_1,\dots,g_t\rangle$, call it $p$, which means that we also have a filtration on $A$ via $p(\mathcal{S}^{\geq(I,\mathbf{m})})$. 
    Note that the monomials in the $-\infty$-component are zero in $A$.

    We now consider the following commutative diagram
    \begin{equation*}
        \begin{tikzcd}
        {\mathcal{S}/\langle\overline{g}_{k}\rangle} && {\mathrm{gr}A} && {\mathrm{gr}R(M)}
        \arrow[two heads, from=1-1, to=1-3]
        \arrow["\cong"', curve={height=24pt}, from=1-1, to=1-5]
        \arrow["{\mathrm{gr}\Phi}", from=1-3, to=1-5]
        \end{tikzcd}
    \end{equation*}

    The first surjection stems from the fact that $\overline{g}_{k}$ and $g_{k}$ coincide in $\mathrm{gr}A$. And the isomorphism is based on the definition of the polynomials $\overline{g}_{k}$.

    Based on the diagram, one can then conclude that the map $\mathrm{gr}\,\Phi$, and hence $\Phi$ itself, has to be injective.
\end{proof}
\section{Flat degeneration}\label{sec-flat-family}
Now, we can go ahead and construct the degeneration. We are going to proceed as follows: First of all, we are constructing the one-parameter family in section \ref{subsec-construct-family}. Then, in section \ref{subsec-free-basis}, we are going to verify that this superalgebra is free as a $\C[t]$-module. We are thus taking a more algebraic approach as opposed to the geometric argument from \cite{Favourable} that a surjective morphism from an irreducible variety onto $\mathbb{A}^1$ is flat. 
While it is possible to replicate this result using the definition of reducedness of Jankowski from \cite[Definition 2.4]{Jankowski2025}, it would be rather difficult to verify if $\mathcal{R}$ is reduced.

\subsection{The ideal of the family}\label{subsec-construct-family}

If we now consider the generators of the quotient
\begin{equation}\label{eq-gen-flag}
    g_{k}=\overline{g}_{k}+\sum\limits_{j=1}^{r_{k}}g_{k,j}
\end{equation}
where $g_{k,j}$ are polynomials in $\C[x_1,\dots,x_n,\xi_1,\dots,\xi_q]$ of which all monomials belong to the same $\Gamma(\lambda,<)$-component of some $(U_{k,j},\mathbf{v}_{k,j},h_{k})$ where $(U_{k,j},\mathbf{v}_{k,j})>(J_{k},\mathbf{n}_{k})$ and $\overline{g}_{k}$ being of degree $(J_{k},\mathbf{n}_{k},h_{k})$.

Since the set of generators is finite and the monomial order is induced, we can apply the result of classical Gröbner theory, see \cite[Lemma 1.2.11]{Approx-Weight}, providing us with a vector $w\in\Z^{q|n}$ such that $w(U_{k,j},\mathbf{v}_{k,j})>w(J_{k},\mathbf{n}_{k})$ for all $1\leq k\leq m$ and $1\leq j\leq r_{k}$.

Before we go ahead and define our $\mathbb{C}[t]$-superalgebra, we introduce some notation to define the ideal more easily.

\begin{definition}\label{definition-max-exchange}
    Let $h\in\N_0$ and let $(I^{\mathrm{h,max}},\mathbf{m}^{\mathrm{h,max}})$ be the maximal element in $\mathrm{es}(K_\mathfrak{b}(h\lambda),<)$. 
    Then we define the following subset of the generators of \eqref{eq-relations-degenerate}
    \begin{align*}
        \mathcal{I}'_h:=\{&x_{i_{1}}\cdots x_{i_{r}}\xi_{j_{1}}\cdots\xi_{j_{h-r}}+(-1)^{\sigma}x_{i'_{1}}\cdots x_{i'_{s}}\xi_{j'_{1}}\cdots\xi_{j'_{h-s}}\,|\,\\
        &I^{\mathrm{h,max}}=I_{i_{1}}+\cdots +I_{i_{r}}+I_{r+j_{1}}+\cdots+I_{r+j_{h-r}}\\
        &\mathbf{m}^{\mathrm{h,max}}=\mathbf{m}_{i_{1}}+\cdots +\mathbf{m}_{i_{r}}+\mathbf{m}_{r+j_{1}}+\cdots+\mathbf{m}_{r+j_{h-r}}\}.
    \end{align*}
    Intuitively speaking, we collect all binomial relations in $\mathrm{gr}\,R(\lambda)$ from \eqref{eq-relations-degenerate} in the $\Gamma(\lambda,<)$-component $(I^{\mathrm{h,max}},\mathbf{m}^{\mathrm{h,max}},h)$.
\end{definition}

We now define a new superalgebra
\begin{equation*}
    \mathcal{R}=\mathcal{S}[t]/\mathcal{I}
\end{equation*}
with $\mathcal{I}$ being the ideal generated by the following set
\begin{align*}
    \left\{g_{k}=\overline{g}_{k}+\sum\limits_{j=1}^{r_{k}}t^{w(U_{k,j},\mathbf{v}_{k,j})-w(J_{k},\mathbf{n}_{k})}g_{k,j}\,\middle|\, 1\leq k\leq m\right\}\cup\bigcup_{h\in\N_0}\mathcal{I}'_h
\end{align*}
\subsection{Constructing a $\mathbb{C}[t]$-basis}\label{subsec-free-basis}
We aim to show that the superalgebra $\mathcal{R}$ is indeed free as a $\mathbb{C}[t]$-module. To achieve this, we are going to show that the basis of the algebra $R(\lambda)$ multiplied with powers of $t$ forms a $\C$-basis of $\mathcal{R}$.

We begin by modifying the filtration from Proposition \ref{prop-iso-flag}. 
For a multi-exponent $(I,\mathbf{m})\in\{0,1\}^{q}\times\N_0^n$ and a natural number $k\in\N_0$, we define $\mathcal{R}_{n}^{\geq((I,\mathbf{m}),k)}$ as the span of all monomials $\overline{x_{i_{1}}\cdots x_{i_{q}}\xi_{j_{1}}\cdots\xi_{j_{p}}t^{r}}$ of total degree $n=p+q$ in $x$ and $\xi$ and $(K,\mathbf{f})>(I,\mathbf{m})$ where
    \begin{align*}
        K&=I_{i_{1}}+\cdots +I_{i_{q}}+I_{r+j_{1}}+\cdots+I_{r+j_{p}}\\
        \mathbf{f}&=\mathbf{m}_{i_{1}}+\cdots +\mathbf{m}_{i_{q}}+\mathbf{m}_{r+j_{1}}+\cdots+\mathbf{m}_{r+j_{p}},
    \end{align*}
and $r$ arbitrary or $(K,\mathbf{f})=(I,\mathbf{m})$ and $r\geq k$. And then, we define
\begin{equation*}
    \mathcal{R}^{\geq((I,\mathbf{m}),k)}=\sum_{n\in\N_0}\mathcal{R}_{n}^{\geq((I,\mathbf{m}),k)}
\end{equation*}

Now, we consider the associated graded space
\begin{equation*}
    \mathrm{gr}\,\mathcal{R}=\bigoplus_{(I,\mathbf{m})\in\{0,1\}^{q}\times\N_{0}^n}\bigoplus_{k\in\N_{0}}\left(\mathcal{R}^{\geq((I,\mathbf{m}),k)}/\mathcal{R}^{\geq((I,\mathbf{m}),k+1)}\right)
\end{equation*}

In particular, we have a projection
\begin{align*}
    \pi:\mathcal{S}[t]/\langle \overline{g}_{k}\rangle&\twoheadrightarrow\mathrm{gr}\,\mathcal{R}\\
    \overline{x_i}&\mapsto (({I_i,\mathbf{m}_i}),0,\overline{x_i})\\
    \overline{\xi}_j&\mapsto (({I_j,\mathbf{m}_j}),0,\overline{\xi_j})\\
    t&\mapsto((0,0),1,\overline{t})
\end{align*}

Via the isomorphism $\mathcal{S}[t]\langle\overline{g}_{k}\rangle\cong\mathrm{gr}\,R(\lambda)\otimes_\C\C[t]$ we have a spanning set given in $\mathrm{gr}\,\mathcal
{R}$ by the images of $\eta_{I,\mathbf{m},h}\cdot t^{r}$ for $(I,\mathbf{m})\in\mathrm{es}(K_{\mathfrak{b}}(h\lambda),<)$ and $r\in\N_0$ arbitrary.

We claim now
\begin{lemma}
    The images $\pi(\eta_{I,\mathbf{m},h}\cdot t^{r})=:\theta_{I,\mathbf{m},h}\cdot t^{r}$ are $\C$-linearly independent in $\mathrm
    {gr}\,\mathcal{R}$.
\end{lemma}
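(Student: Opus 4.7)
The plan is to reduce the linear independence claim to a single-summand nonvanishing statement, and then detect nonvanishing using an isomorphism between the localization $\mathcal{R}[t^{-1}]$ and $R(\lambda)\otimes\mathbb{C}[t,t^{-1}]$ via a leading-term argument.

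First, I observe that each element $\theta_{I,\mathbf{m},h}\,t^{r}$ lives in a unique multi-graded summand of $\mathrm{gr}\,\mathcal{R}$: the total $x,\xi$-degree $h$ together with the filtration label $((I,\mathbf{m}),r)$ specify a single direct summand. Hence a $\mathbb{C}$-linear relation among these elements decouples across summands, reducing the lemma to proving $\theta_{I^{*},\mathbf{m}^{*},h^{*}}\,t^{r^{*}}\neq 0$ for every essential triple $(I^{*},\mathbf{m}^{*},h^{*})\in\Gamma(\lambda,<)$ and every $r^{*}\in\mathbb{N}_{0}$.

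The main tool is an isomorphism of $\mathbb{C}[t,t^{-1}]$-superalgebras
\begin{equation*}
\phi\colon\mathcal{R}[t^{-1}]\xrightarrow{\ \sim\ }R(\lambda)\otimes_{\mathbb{C}}\mathbb{C}[t,t^{-1}],
\end{equation*}
which sends each generator of $\mathcal{S}$ corresponding to an essential monomial $(I_{i},\mathbf{m}_{i})$ to $t^{-w(I_{i},\mathbf{m}_{i})}\,\eta_{I_{i},\mathbf{m}_{i},1}$ and fixes $t$. Well-definedness combines the $\Gamma$-homogeneity of $\overline{g}_{k}$ and of each correction $g_{k,j}$ with the defining property that the $t$-exponent in the deformation equals $w(U_{k,j},\mathbf{v}_{k,j})-w(J_{k},\mathbf{n}_{k})$: direct substitution yields $\phi(g_{k})=t^{-w(J_{k},\mathbf{n}_{k})}\,g_{k}^{\mathrm{orig}}(\eta)$, which vanishes in $R(\lambda)$ by Proposition \ref{prop-iso-flag}. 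Each binomial in $\mathcal{I}'_{h}$ already vanishes in $R(\lambda)$, since its two constituent monomials both represent the maximal essential class at level $h\lambda$, which admits no strictly higher correction terms.

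To conclude, suppose for contradiction that $\theta_{I^{*},\mathbf{m}^{*},h^{*}}\,t^{r^{*}}=0$ in $\mathrm{gr}\,\mathcal{R}$. By favourability, choose a monomial $M$ in the generators of $\mathcal{S}$ of total degree $h^{*}$ whose corresponding essential multi-exponents sum to $(I^{*},\mathbf{m}^{*})$. Then Proposition \ref{prop-mult-straight} gives
\begin{equation*}
\phi(M)=t^{-w(I^{*},\mathbf{m}^{*})}\Bigl(c\,\eta_{I^{*},\mathbf{m}^{*},h^{*}}+\sum_{(I',\mathbf{m}')>(I^{*},\mathbf{m}^{*})}c'_{I',\mathbf{m}'}\,\eta_{I',\mathbf{m}',h^{*}}\Bigr),\quad c\neq 0.
\end{equation*}
The vanishing hypothesis expresses $M\,t^{r^{*}}$ in $\mathcal{R}$ as a $\mathbb{C}$-linear combination of monomials each of which has either strictly greater $\Gamma$-component or the same $\Gamma$-component with $t$-power at least $r^{*}+1$. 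Applying $\phi$ and reading off the coefficient of the basis vector $\eta_{I^{*},\mathbf{m}^{*},h^{*}}\in R(\lambda)$ on both sides yields an equality in $\mathbb{C}[t,t^{-1}]$: the left-hand side contributes the single Laurent term $c\,t^{r^{*}-w(I^{*},\mathbf{m}^{*})}$, while the right-hand side contributes only Laurent monomials of $t$-exponent at least $r^{*}+1-w(I^{*},\mathbf{m}^{*})$, since strictly-higher-$\Gamma$ monomials have $\phi$-expansions supported on strictly higher essential $\eta$'s. This is impossible, giving the desired contradiction. The main technical obstacle I expect is the careful verification of well-definedness of $\phi$, including the sign and weight bookkeeping in the $\Gamma$-homogeneous decomposition of the $g_{k}$; once that is in place, the leading-term comparison is essentially formal.
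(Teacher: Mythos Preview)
Your argument is correct and proceeds along essentially the same lines as the paper's proof, but with two streamlining choices worth noting.

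First, you exploit the $\N_{0}$-grading of $\mathcal{R}$ by total $x,\xi$-degree (which is preserved by all generators of $\mathcal{I}$) to reduce immediately to showing that a \emph{single} element $\theta_{I^{*},\mathbf{m}^{*},h^{*}}\,t^{r^{*}}$ is nonzero. The paper does not invoke this extra grading and therefore still carries a sum over $h$ within each subquotient, which it then unwinds via the linear independence of the $\eta_{I,\mathbf{m},h}$ in $R(\lambda)$ at the end of the argument. Your reduction is legitimate and cleaner.

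Second, you package the family of specializations $\mathcal{R}/\langle t-a\rangle\xrightarrow{\sim}R(\lambda)$ for $a\neq 0$ into the single $\C[t,t^{-1}]$-algebra map $\phi:\mathcal{R}[t^{-1}]\to R(\lambda)\otimes\C[t,t^{-1}]$ and extract the contradiction by comparing the lowest $t$-exponent in the $\eta_{I^{*},\mathbf{m}^{*},h^{*}}$-coefficient. The paper instead specializes pointwise at each $a\in\C\setminus\{0\}$ and concludes via ``a polynomial with infinitely many roots is zero.'' These are equivalent manoeuvres; yours avoids the rewriting step in the paper where monomials of the same $\Gamma$-degree are identified up to sign in the subquotient.

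One small remark: you call $\phi$ an isomorphism, but your argument only uses that it is a well-defined homomorphism (which you do verify). The inverse can indeed be written down explicitly by $\eta_{I_i,\mathbf{m}_i,1}\mapsto t^{\,w(I_i,\mathbf{m}_i)}\cdot(\text{generator})$, but you neither prove nor need this.
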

\begin{proof}
    It suffices to only consider one subquotient $\left(\mathcal{R}^{\geq((I,\mathbf{m}),k)}/\mathcal{R}^{\geq((I,\mathbf{m}),k+1)}\right)$ for a fixed $(I,\mathbf{m})$ and $k$.

    Hence, we consider now coefficients $\lambda_{I,\mathbf{m},k,h}\in\C$ such that
    \begin{equation}\label{eq-power-basis-indep}
        \sum_{h\in\N_0}\lambda_{I,\mathbf{m},k,h}\cdot\theta_{I,\mathbf{m},h}\cdot t^{k}\in\mathcal{R}^{\geq((I,\mathbf{m}),k+1)}
    \end{equation}
    Before we go further, we need to consider how we can rewrite monomials that would sum to $(I,\mathbf{m})$.

    Suppose now that $x_{i_{1}}\cdots x_{i_{q}}\xi_{j_{1}}\cdots\xi_{j_{p}}\in\mathcal{S}$ is a monomial of total degree $q+p$ and the corresponding sum in $\{0,1\}^{q}\times\N^{n}$ equals $(I,\mathbf{m})$.

    Then we can write in $\mathcal{S}$
    \begin{equation*}
        x_{i_{1}}\cdots x_{i_{q}}\xi_{j_{1}}\cdots\xi_{j_{p}}+(-1)^{\sigma}\xi^{I}x^{\mathbf{m}}=\sum\limits_{k=1}^{m}f_{k}\overline{g}_{k}
    \end{equation*}
    for some $\sigma\in\{0,1\}$.
    
    Now we remember that the ring $\mathcal{S}$ is graded via $\Gamma(\lambda,<)$ which means that each $f_{k}$ can be assumed to be homogenous of degree $((I-J_{k},\mathbf{m}-\mathbf{n}_{k}),p+q-h_{k})$ if the difference can be defined. If not, then $f_{k}=0$.

    However, then we also get
    \begin{align*}
        &\sum\limits_{k=1}^{m}f_{k}\overline{g}_{k}+\sum\limits_{j=1}^{r_{k}}f_{k}t^{w(U_{k,j},\mathbf{v}_{k,j})-w(J_{k},\mathbf{n}_{k})}g_{k,j}=\\
        &x_{i_{1}}\cdots x_{i_{q}}\xi_{j_{1}}\cdots\xi_{j_{p}}+(-1)^{\sigma}\xi^{I}x^{\mathbf{m}}+\sum_{k=1}^{m}\sum\limits_{j=1}^{r_{k}}f_{k}t^{w(U_{k,j},\mathbf{v}_{k,j})-w(J_{k},\mathbf{n}_{k})}g_{k,j}\in\mathcal{I}
    \end{align*}

    But then, in the subquotient $\left(\mathcal{R}^{\geq((I,\mathbf{m}),k)}/\mathcal{R}^{\geq((I,\mathbf{m}),k+1)}\right)$, we would get that
    \begin{equation*}
         \overline{x_{i_{1}}\cdots x_{i_{q}}\xi_{j_{1}}\cdots\xi_{j_{p}}}=-(-1)^{\sigma}\overline{\xi^{I}x^{\mathbf{m}}}
    \end{equation*}

    This implies that we can restate \eqref{eq-power-basis-indep} to be
    \begin{align*}
        &\sum_{k'\geq k}\sum_{h\in\N_0}\lambda_{I,\mathbf{m},k',h}\cdot\theta_{I,\mathbf{m},h}\cdot t^{k'}\in\\
        &\mathrm{span}\langle \overline{x_{i_{1}}\cdots x_{i_{q}}\xi_{j_{1}}\cdots\xi_{j_{p}}t^r}|
        (I_{i_{1}},{\mathbf{m}_{i_{1}}})+\cdots+(I_{i_{q}},{\mathbf{m}_{i_{q}}})+(I_{j_{1}},{\mathbf{m}_{j_{1}}})+\cdots+(I_{j_{p}},{\mathbf{m}_{j_{p}}})>(I,\mathbf{m})\rangle
    \end{align*}
    Now we specialise $t$ to an arbitrary $a\in\C\setminus\{0\}$. After applying the isomorphism
    \begin{align*}
        \mathcal{R}/\langle t-a\rangle&\rightarrow R(\lambda)\\
        x_i&\mapsto a^{-w(I_{i},\mathbf{m}_i)}x_i\\
        \xi_j&\mapsto a^{-w(I_{j+r},\mathbf{m}_{j+r})}\xi_j\\
    \end{align*}
    we get that
    \begin{equation*}
        \sum_{h\in\N_0}\eta_{I,\mathbf{m},h}\left(\sum_{k'\geq k}\lambda_{I,\mathbf{m},k',h}\cdot a^{k'-w(I,\mathbf{m})}\right)\in R(\lambda)^{>(I,\mathbf{m})}
    \end{equation*}

    This implies, in particular, that
    \begin{equation*}
        \left(\sum_{k'\geq k}\lambda_{I,\mathbf{m},k',h}\cdot a^{k'-w(I,\mathbf{m})}\right)=0
    \end{equation*}
    However, we can let $a$ be any non-zero complex number; hence the corresponding polynomial has infinitely many roots, implying that $\lambda_{I,\mathbf{m},k',h}=0$ for any $k'\geq k$ and $h\in\N_0$ yielding the claim.
\end{proof}

This claim tells us that the vectors $\theta_{I,\mathbf{m},h}\cdot t^{r}$ are linearly independent in $\mathcal{R}$.
We now want to verify that they generate $\mathcal{R}$.

Hence, we now prove the next lemma.
\begin{lemma}\label{lemma-generating-standards}
    The vectors $\theta_{I,\mathbf{m},h}\cdot t^{r}$ form a $\mathbb{C}$-generating set as a vector space of $\mathcal{R}$. 
\end{lemma}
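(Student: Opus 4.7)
The plan is to take any element of $\mathcal{R}$ and reduce it modulo $\mathcal{I}$ to a $\C$-linear combination of the standard monomials $\xi^I x^{\mathbf{m}} t^r$ indexed by $(I,\mathbf{m}) \in \mathrm{es}(K_{\mathfrak{b}}(h\lambda),<)$ with $h = |I|+|\mathbf{m}|$, which we identify via \eqref{eq-monomial-generators-ass-graded} with $\theta_{I,\mathbf{m},h}\cdot t^r$. Since every element of $\mathcal{R}$ is a $\C$-linear combination of monomials $x^\alpha \xi^\beta t^r$ and multiplication by $t^r$ commutes with the reduction, it suffices to reduce each monomial $x^\alpha \xi^\beta$ of fixed total degree $h = |\alpha|+|\beta|$.

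To such a monomial I would attach its $\Gamma(\lambda,<)$-degree, namely the sum of the essential exponents $(I_i,\mathbf{m}_i)$ of its factors, or the absorbing element $-\infty$ if the sum leaves $\{0,1\}^q \times \N^n$. For fixed $h$ the set of realizable $\Gamma$-degrees is finite, so I would proceed by downward induction starting from the maximum $(I^{h,\max}, \mathbf{m}^{h,\max})$. The $-\infty$ case is immediate: such an $x^\alpha \xi^\beta$ is (up to sign) itself a monomial generator $\overline{g}_k$, with $g_k = \overline{g}_k \in \mathcal{I}$ since there is no higher $\Gamma$-component to accommodate a correction, so $x^\alpha \xi^\beta = 0$ in $\mathcal{R}$. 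At the maximum, Definition \ref{definition-max-exchange} places in $\mathcal{I}'_h \subset \mathcal{I}$ a binomial relating, up to sign, any monomial of this $\Gamma$-degree with the canonical $\xi^{I^{h,\max}} x^{\mathbf{m}^{h,\max}}$, giving the desired standard form.

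For a non-maximal $\Gamma$-degree $(I,\mathbf{m})$: if $(I,\mathbf{m}) \in \mathrm{es}(K_{\mathfrak{b}}(h\lambda),<)$, a binomial $\overline{g}_k$ in the kernel of $\mathcal{S}\to \mathrm{gr}\,R(\lambda)$ relates $x^\alpha \xi^\beta$ with $\xi^I x^{\mathbf{m}}$, and the lift $g_k = \overline{g}_k + \sum_j t^{\ell_{k,j}} g_{k,j} \in \mathcal{I}$ yields $x^\alpha \xi^\beta \equiv \mp \xi^I x^{\mathbf{m}} - \sum_j t^{\ell_{k,j}} g_{k,j}$ in $\mathcal{R}$; each $g_{k,j}$ is homogeneous of $\Gamma$-degree $(U_{k,j},\mathbf{v}_{k,j}) > (I,\mathbf{m})$ at the same total degree $h$, and the inductive hypothesis reduces it. If $(I,\mathbf{m})$ is compatible but not essential, then no essential exponent lives at this $\Gamma$-degree, so $x^\alpha \xi^\beta$ maps to zero in $\mathrm{gr}\,R(\lambda)$ and therefore lies in $\langle \overline{g}_1,\ldots,\overline{g}_m\rangle$; writing $x^\alpha \xi^\beta = \sum_i f_i \overline{g}_i$ with $\Gamma$-homogeneous $f_i$ and substituting $\overline{g}_i \equiv -\sum_j t^{\ell_{i,j}} g_{i,j} \pmod{\mathcal{I}}$ once more produces strictly higher-$\Gamma$-degree contributions handled by induction.

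The main obstacle is the compatible-but-not-essential case, which rests on recognizing that such monomials map to zero in $\mathrm{gr}\,R(\lambda)$ and are therefore expressible using the generators $\overline{g}_k$ of its defining ideal. Together with the fact, encoded in \eqref{eq-gen-flag}, that each correction $g_{k,j}$ strictly increases the $\Gamma$-degree at unchanged total degree $h$, this forces the induction to terminate within each finite $\Gamma$-level and produces, for every monomial, a reduction to a $\C$-linear combination of standard monomials times powers of $t$, proving the claim.
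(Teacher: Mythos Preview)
Your argument is correct and follows the same downward induction on the $\Gamma$-degree as the paper, invoking $\mathcal{I}'_h$ at the top and the lifted relations $g_k$ for the inductive step. Two small remarks: by Proposition~\ref{prop-essential-add} any compatible sum of essential monomials of $K_{\mathfrak{b}}(\lambda)$ is automatically essential for $K_{\mathfrak{b}}(h\lambda)$, so your ``compatible but not essential'' case is vacuous; and in the $-\infty$ case the monomial $x^\alpha\xi^\beta$ need not literally be one of the chosen generators $\overline{g}_k$ but only lies in the ideal they generate, which still gives $x^\alpha\xi^\beta=0$ in $\mathcal{R}$ since for the monomial generators one has $g_k=\overline{g}_k$.
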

\begin{proof}
    It suffices to show that the image of any monomial $x_{i_{1}}\cdots x_{i_{q}}\xi_{j_{1}}\cdots\xi_{j_{p}}$ can be generated. Fix the graded component to be $(J,\mathbf{n},p+q)$
    We can go by induction on the monomial order of the essential monomials in $K_{\mathfrak{b}}((p+q)\lambda)$.

    If $(J,\mathbf{n})$ is already the biggest essential monomial, then we already have by Definition \ref{definition-max-exchange}
    \begin{equation*}
        x_{i_{1}}\cdots x_{i_{r}}\xi_{j_{1}}\cdots\xi_{j_{h-r}}=-(-1)^{\sigma}x_{i'_{1}}\cdots x_{i'_{s}}\xi_{j'_{1}}\cdots\xi_{j'_{h-s}}
    \end{equation*}
    in $\mathcal{R}$.
    And if $(J,\mathbf{n})$ is now smaller, then we have similar to the previous proof
    \begin{equation*}
        x_{i_{1}}\cdots x_{i_{q}}\xi_{j_{1}}\cdots\xi_{j_{p}}=-(-1)^{\sigma}\xi^{I}x^{m}+\sum\limits_{j=1}^{r_{k}}f_{k}t^{w(U_{k,j},\mathbf{v}_{k,j})-w(J_{k},\mathbf{n}_{k})}g_{k,j}
    \end{equation*}
    And since the terms $f_{k}t^{w(U_{k,j},\mathbf{v}_{k,j})-w(J_{k},\mathbf{n}_{k})}g_{k,j}$ all are spanned by monomials greater than $(J,\mathbf{n})$ multiplied with some power of $t$, we can apply the induction hypothesis. Hence, the claim follows.
\end{proof}
\begin{corollary}
    The vectors $\theta_{I,\mathbf{m},h}\cdot t^{r}$ for $(I,\mathbf{m})\in\mathrm{es}(K_{\mathfrak{b}}(h\lambda),<)$ and $r\in\N_0$ form a $\mathbb{C}$-basis of $\mathcal{R}$. Hence the vectors $\theta_{I,\mathbf{m},h}$ for $(I,\mathbf{m})\in\mathrm{es}(K_{\mathfrak{b}}(h\lambda),<)$ and $h\in\N_0$ form a $\C[t]$-basis of $\mathcal{R}$ implying that this $\C[t]$-module is free and thus, in particular, flat.
\end{corollary}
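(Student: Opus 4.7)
The plan is to assemble the corollary directly from the two lemmas proved just above. The first lemma established that the images $\theta_{I,\mathbf{m},h}\cdot t^{r}$ are $\C$-linearly independent in the associated graded space $\mathrm{gr}\,\mathcal{R}$. I would first observe that this independence lifts to $\mathcal{R}$ itself by the standard filtration argument: given any purported nontrivial $\C$-linear relation among the $\theta_{I,\mathbf{m},h}\cdot t^{r}$ in $\mathcal{R}$, one selects the minimal index $((I,\mathbf{m}),k)$, with respect to the filtration, among those whose coefficient in the relation is nonzero. Projecting the relation into the subquotient $\mathcal{R}^{\geq((I,\mathbf{m}),k)}/\mathcal{R}^{\geq((I,\mathbf{m}),k+1)}$ then produces a nontrivial $\C$-linear relation among the $\theta_{I,\mathbf{m},h}\cdot t^{k}$ in $\mathrm{gr}\,\mathcal{R}$, contradicting the previous lemma. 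Hence no such relation exists, and the $\theta_{I,\mathbf{m},h}\cdot t^{r}$ are $\C$-linearly independent in $\mathcal{R}$.

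Combined with Lemma \ref{lemma-generating-standards}, which provides that these same vectors span $\mathcal{R}$ as a $\C$-vector space, the collection $\{\theta_{I,\mathbf{m},h}\cdot t^{r}\mid (I,\mathbf{m})\in\mathrm{es}(K_{\mathfrak{b}}(h\lambda),<),\, h,r\in\N_{0}\}$ is a $\C$-basis of $\mathcal{R}$. Regrouping this basis according to the powers of $t$ immediately yields that $\{\theta_{I,\mathbf{m},h}\mid (I,\mathbf{m})\in\mathrm{es}(K_{\mathfrak{b}}(h\lambda),<),\, h\in\N_{0}\}$ forms a $\C[t]$-basis of $\mathcal{R}$.

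Finally, the existence of a $\C[t]$-basis means $\mathcal{R}$ is a free $\C[t]$-module, and free modules over any ring are flat, so in particular $\mathcal{R}$ is flat over $\C[t]$, as claimed. The two preceding lemmas carry the analytical weight of the proof, so the only point meriting attention is that the index set of the filtration is totally ordered in a way that lets us extract a unique minimal component: this follows from the monomial order being total on essential exponents combined with the natural order on $\N_{0}$ for the exponent of $t$, so no serious obstacle arises.
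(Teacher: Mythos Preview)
Your proposal is correct and follows essentially the same route as the paper. The paper does not give an explicit proof of the corollary; it simply remarks, after the first lemma, that linear independence in $\mathrm{gr}\,\mathcal{R}$ immediately yields linear independence in $\mathcal{R}$, and then states the corollary as a direct consequence of the two lemmas --- you have just written out the standard filtration-lifting argument that the paper leaves implicit.
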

\section{The supergeometric interpretation}\label{sec-supergeom}
In this section, we are going to interpret the previous result in a geometric context, mostly following \cite{Fioresi}. Another reference the author recommends are the papers by Sherman like \cite{Sherman}.

We are going to state a few fundamentals from supergeometry and also answer the question when the monomial superalgebra $\mathrm{gr}\,R(\lambda)$ is a toric supervariety \cite{Jankowski2024, Jankowski2025}.\smallskip

Let $A=A_{\overline{0}}\oplus A_{\overline
1}$ be a commutative superalgebra. 
Then $A$ itself forms an $A_{\overline{0}}$-module and we can define a sheaf of $\mathcal{O}_{\mathrm
{Spec}\,A_{\overline
0}}$-modules, call it $\mathcal
O_{A}$, such that $\mathcal{O}_{A}(D(f))=A_{f}$ for any $f\in A_{\overline{0}}$. 
In fact, it is a sheaf of commutative superalgebras. This space with the sheaf $(|\mathrm{Spec}\,A_{\overline{0}}|,\mathcal
{O}_{A})$ is then referred to as an \textit{affine superscheme}. The category of affine superschemes is anti-equivalent to the category of commutative superalgebras.

A general \textit{superscheme} is a topological space with a sheaf of commutative superalgebras with local stalks with an open covering by affine superschemes. \cite[Proposition 10.1.3]{Fioresi}\smallskip

Similar to the classical case, one can define morphisms for superschemes as well as fibres \cite[p. 5407]{Zubkov-Dimension-Theory} and flatness. \cite[Section 3]{Masuoka-Group-Superschemes}
This leads us then to the result.
\begin{corollary}\label{result1}
    The morphism of superschemes $\kappa:\mathrm{Spec}\,\mathcal{R}\rightarrow\mathbb{A}^{1|0}$ is flat and the fibers are
    \begin{align*}
        \kappa^{-1}(a)\cong\begin{cases}
            \mathrm{Spec}\,R(\lambda),\hspace{1em}a\neq0\\
            \mathrm{Spec}\,\mathbb{C}[\xi^{I}x^{\mathbf{m}}v|(I,\mathbf{m})\in\mathrm{es}(K_{\mathfrak{b}}(\lambda),<)],\hspace{1em}a=0
        \end{cases}
    \end{align*}
\end{corollary}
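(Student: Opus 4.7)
The plan is to extract the corollary as a direct geometric interpretation of the free $\C[t]$-module structure on $\mathcal{R}$ established in the preceding corollary. First, the inclusion $\C[t]\hookrightarrow\mathcal{R}$ as the subalgebra of scalars corresponds, under the anti-equivalence between affine superschemes and commutative superalgebras recalled at the start of this section, to a morphism $\kappa:\mathrm{Spec}\,\mathcal{R}\to\mathrm{Spec}\,\C[t]=\mathbb{A}^{1|0}$. Flatness is immediate: by the preceding corollary $\mathcal{R}$ is free, hence flat, as a $\C[t]$-module, and flatness of modules translates directly into flatness of the associated morphism of superschemes in the sense of \cite{Masuoka-Group-Superschemes}.

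For the general fiber over $a\in\C\setminus\{0\}$, one has $\kappa^{-1}(a)=\mathrm{Spec}(\mathcal{R}/(t-a)\mathcal{R})$. The isomorphism $\mathcal{R}/\langle t-a\rangle\cong R(\lambda)$ was already produced inside the proof of the linear independence lemma above: the assignment $x_i\mapsto a^{-w(I_i,\mathbf{m}_i)}x_i$ and $\xi_j\mapsto a^{-w(I_{j+r},\mathbf{m}_{j+r})}\xi_j$ converts each family generator $g_k=\overline{g}_k+\sum_j t^{w(U_{k,j},\mathbf{v}_{k,j})-w(J_k,\mathbf{n}_k)}g_{k,j}$, specialised at $t=a$, into a scalar multiple of the corresponding generator $g_k$ of the ideal defining $R(\lambda)$ via Proposition \ref{prop-iso-flag}. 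Compatibility with the relations in $\bigcup_h\mathcal{I}'_h$ is automatic since these are $t$-free and already hold in $R(\lambda)$ by the binomial relations from \eqref{eq-relations-degenerate}. The one thing to verify is that the diagonal rescaling is a well-defined algebra map on the quotient, which reduces to checking that each $\Gamma(\lambda,<)$-homogeneous monomial in the family relations picks up a prescribed power of $a$, exactly canceling the factors $a^{w(U_{k,j},\mathbf{v}_{k,j})-w(J_k,\mathbf{n}_k)}$.

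For the special fiber $\kappa^{-1}(0)=\mathrm{Spec}(\mathcal{R}/t\mathcal{R})$, setting $t=0$ kills every correction term $t^{w(U_{k,j},\mathbf{v}_{k,j})-w(J_k,\mathbf{n}_k)}g_{k,j}$ in $g_k$, since each exponent is strictly positive by the defining property of the weight vector $w$ chosen from classical Gr\"obner theory. Thus the image of $g_k$ in $\mathcal{R}/(t)$ is exactly $\overline{g}_k$, and together with the $t$-free relations in $\bigcup_h\mathcal{I}'_h$ we recover precisely the ideal whose quotient gives the monomial superalgebra $\mathrm{gr}\,R(\lambda)\cong\C[\xi^{I}x^{\mathbf{m}}v\mid(I,\mathbf{m})\in\mathrm{es}(K_{\mathfrak{b}}(\lambda),<)]$ via the identification in \eqref{eq-monomial-generators-ass-graded}.

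The main obstacle is ensuring that no extra relations appear in either quotient beyond those listed. This is precisely what the freeness of $\mathcal{R}$ as a $\C[t]$-module buys us: once the $\C[t]$-basis $\theta_{I,\mathbf{m},h}$ is in place, no hidden $t$-torsion can contribute spurious elements to $\mathcal{R}/(t)$, and comparing $\C$-dimensions in each $\Gamma(\lambda,<)$-component confirms that both specialisations have the expected size. At $a\neq 0$ the rescaling is algebraically a change of coordinates on $\mathcal{S}$ that descends to the quotient, while at $a=0$ the monomial identification is exactly the one already worked out in Section \ref{sec-favourable}.
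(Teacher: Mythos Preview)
Your proposal is correct and follows exactly the approach the paper sets up: the paper states Corollary~\ref{result1} without proof, treating it as the immediate geometric reading of the preceding corollary on freeness, and your argument unpacks precisely that---flatness from freeness, the general fibre via the rescaling isomorphism $x_i\mapsto a^{-w(I_i,\mathbf{m}_i)}x_i$ already exhibited inside the linear-independence lemma, and the special fibre by setting $t=0$ in the generators of $\mathcal{I}$. Your final paragraph correctly identifies that freeness (hence absence of $t$-torsion) together with a dimension count is what rules out extra relations in $\mathcal{R}/(t)$ beyond $\langle\overline{g}_k\rangle$.
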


Now, we are going to verify that the superalgebras one gets from the constructions from \cite{Kus-Fourier} are indeed toric supervarieties according to \cite{Jankowski2025}.

First of all, we recall the notions of supertori, mostly following \cite{Jankowski2024} and \cite{Jankowski2025}.
\begin{definition}[{\cites[Definition 4.1]{Jankowski2024}[Definition 4.1]{Jankowski2025}}]
    An algebraic supertorus is an algebraic supergroup $T = (T_0 , \mathfrak{t})$ given by a Harish-Chandra pair, \cite[p. 164]{Serg_Rep}, such that $T_0\cong (\C^\times)^n$ is an ordinary algebraic torus and $[\mathfrak{t}_0,\mathfrak{t}]=0$.
\end{definition}
Now note that the corresponding superalgebra is of the form $\C[t_{1}^{\pm},\dots,t_{p}^{\pm},\xi_1,\dots,\xi_q]$.

We also get a $\mathfrak{t}$-action on $\C[t_{1}^{\pm},\dots,t_{p}^{\pm},\xi_1,\dots,\xi_q]$. 
Namely, if $\mathfrak{t}=\C\{x_1,...,x_n\}\oplus\C\{\theta_1,...,\theta_s\}$ and $[\theta_i,\theta_j] = (x_{ij})_1x_1+\dots + (x_{ij})_px_p\in\mathfrak{t}_{\overline{0}}$ for $1\leq i,j\leq q$, then $\mathfrak{t}$ acts von $\C[T]$ via
\begin{align}\label{eq-deriv-odd-toric}
    x_i&\mapsto-t_i\frac{\partial}{\partial t_i}\\
    \theta_i&\mapsto\sum_{j=1}^{q}\xi_j\left\langle \frac{t\partial}{\partial t},x_{ij}\right\rangle-\frac{\partial}{\partial\xi_i},
\end{align}
where $\left\langle \frac{t\partial}{\partial t},x_{ij}\right\rangle=(x_{ij})_1t_1\frac{t\partial}{\partial t_1}+\dots + (x_{ij})_pt_p\frac{t\partial}{\partial t_p}$
Before we can state the notion of supervariety, we also need some algebraic definitions from \cite{Jankowski2025} for arbitrary superrings \cite[Definition 2.1]{Jankowski2025}.
\begin{definition}[{\cite[Definition 2.2]{Jankowski2025}}]
    Let $A$ be a superring. The \textit{total superring of fractions} of $A$ is $K(A):=\mathrm{nonzdiv}(A)^{-1}A$, where $\mathrm{nonzdiv}(A)\subseteq A_{\overline{0}}$ is the set of non-zero divisor. We denote by $i:A \rightarrow K(A)$ the natural inclusion, by $J_A$ the ideal $i^{-1}((K(A)_{\overline{1}})) \subseteq A$, and by $\overline{A}$ the \textit{underlying ring} $A/J_A$. If $S$ is any subset of $A$, we write $\overline{S}$ for its image in $\overline{A}$.
\end{definition}
\begin{definition}[{\cite[Definition 2.4]{Jankowski2025}}]
    Let $A$ be a superring.
    \begin{enumerate}[label=(\alph*)]
        \item $A$ is a \textit{superfield} if $J_A = A\setminus A^\times$
        \item $A$ is an \textit{integral superdomain} if $J_A = \mathrm{zdiv}(A)$, where $\mathrm{zdiv}(A)$ is the set of zero-divisors in $A$
        \item $A$ is \textit{reduced} if $J_A = \mathrm{Nil}(A)$ and $\overline{\mathrm{zdiv}(A)} = \mathrm{zdiv}(\overline{A})$
    \end{enumerate}
\end{definition}

\begin{definition}[{\cite[Definition 2.10]{Jankowski2025}}]
Let $A$ be a superring.
    \begin{enumerate}[label=(\alph*)]
        \item $A$ is \textit{fermionically regular} (or FR) if it is of the form $\bigwedge M$ for some commutative ring $R$ and $R$-module $M$. In this situation, we have $J_A = (A_{\overline{1}})$, and we may assume $R\cong \overline{A}$ and $M\cong J_A/J_A^2$.
        \item $A$ is \textit{generically fermionically regular} (or GFR) if $A_\mathfrak{p}$ is a FR superring for all minimal prime ideals $\mathfrak{p}$.
        \item $A$ is GFRR if it is GFR and reduced (so that each $A_\mathfrak{p}$ is a FR superfield for $\mathfrak{p}$ minimal).
    \end{enumerate}
\end{definition}
Jankowski also defined similar notions for superschemes.
\begin{definition}[{\cite[Definition 3.2]{Jankowski2025}}]
    Let $X$ be a superscheme.
    \begin{enumerate}[label=(\alph*)]
        \item $X$ is \textit{irreducible} if $|X|$ is irreducible as a topological space.
        \item $X$ is \textit{reduced} if $\mathscr{O}_X(U)$ is a reduced superring for each affine open $U$.
        \item $X$ is \textit{reduced} if $\mathscr{O}_X(U)$ is a reduced superring for each affine open $U$.
        \item $X$ is \textit{integral} if $\mathscr{O}_X(U)$ is an integral superdomain for each affine open $U$.
        \item $X$ is \textit{GFR} if there is a collection of points $\{\eta\}$ whose closure is $X$ and for which
each $\mathscr{O}_{X,\eta}$ is FR.
\item $X$ is \textit{GFRR} if it is GFR and reduced.
    \end{enumerate}
\end{definition}
\begin{definition}[{\cite[Definition 3.5]{Jankowski2025}}]
    An algebraic supervariety is a GFRR separated superscheme of finite type over an algebraically closed field.
\end{definition}
This then allows us to state the definition of a toric supervariety.
\begin{definition}[{\cite[4.5]{Jankowski2025}}]
    A (not necessarily normal) \textit{toric supervariety} is an irreducible supervariety $X$ equipped with a left action by a supertorus $T$, and a $T$-equivariant map $T \rightarrow X$ which factors through an open immersion of a homogeneous space $T/H$. We will assume $H_{\overline{0}}=1$. If $H =1$, then the toric supervariety will be called \textit{faithful}.
\end{definition}
\begin{lemma}[{\cite[Lemma 4.7]{Jankowski2025}}]
    Let $X = \mathrm{Spec}\,A$ be an affine supervariety. Then:
    \begin{enumerate}[label=(\alph*)]
        \item{ $X$ is toric (but not necessarily normal) for a quotient of the supertorus $T$ if and only if $A$ is isomorphic to a finitely-generated subalgebra of $\C[T]$ which is also a $\mathfrak{t}$-subrepresentation.}
        \item{ If $X$ is toric (but not necessarily normal) for the supertorus $T$, then it is faithful if and only if there is a non-nilpotent $T_{\overline{0}}$-weight vector $f\in A$ such that $A[f^{-1}] \cong\C[T ]$.}
    \end{enumerate}
\end{lemma}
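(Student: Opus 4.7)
The approach splits naturally along the two parts, with (b) being essentially a refinement of (a). For the forward direction of (a), I would dualise the open immersion $j: T/H \hookrightarrow X$ provided by the toric structure. The pullback $j^{\sharp}: A \to \Gamma(T/H, \mathcal{O}_{T/H})$ is injective because $j$ is dominant into the irreducible $X$, and $\Gamma(T/H, \mathcal{O}_{T/H}) = \C[T]^{H}$ embeds canonically into $\C[T]$ as the $H$-invariants. Finite generation is immediate from $X$ being of finite type over $\C$, and $T$-equivariance of $j$ dualises into $A$ being a $T$-subrepresentation of $\C[T]$ under left translation, hence a $\mathfrak{t}$-subrepresentation via the Harish-Chandra pair correspondence.

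For the reverse direction of (a), I would start from a finitely generated $\mathfrak{t}$-subrepresentation $A \subseteq \C[T]$ and first promote it to a $T$-subrepresentation: $A_{\overline{0}}$ is closed under the $\mathfrak{t}_{\overline{0}}$-action, hence under the connected torus $T_{\overline{0}}$, and the Harish-Chandra pair $(T_{\overline{0}}, \mathfrak{t})$ then lifts this to a full $T$-action on $A$. The inclusion $A \hookrightarrow \C[T]$ dualises to a $T$-equivariant dominant morphism $\mu: T \to X$, and I would define $H$ as the stabiliser subsupergroup of $\mu(e)$. The condition $H_{\overline{0}}=1$ would follow from the fact that on the classical part, $\C[T_{\overline{0}}]$ is already reached by the image (because the nontrivial $\mathfrak{t}_{\overline{0}}$-action separates points in $A_{\overline{0}}$), allowing one to factor $\mu$ as $T \twoheadrightarrow T/H \to X$ with the second map an open immersion.

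For part (b), the faithful condition $H = 1$ means $T \hookrightarrow X$ is already an open immersion. On the even part, $T_{\overline{0}} \hookrightarrow X_{\overline{0}}$ is a $T_{\overline{0}}$-equivariant open immersion of affine varieties, which forces the complement to be a principal divisor $V(f)$ for a $T_{\overline{0}}$-semi-invariant $f \in A_{\overline{0}}$; non-nilpotency of $f$ is automatic since it restricts to a nontrivial unit on $T_{\overline{0}}$. The open immersion then provides the isomorphism $A[f^{-1}] \cong \C[T]$, where the odd generators $\xi_i$ arise from the extension of the $T$-action across the open locus. Conversely, given such an $f$, localisation realises $\mathrm{Spec}\,A[f^{-1}] \cong T$ as an open subsuperscheme of $X$, and the weight-vector condition on $f$ together with the $\mathfrak{t}$-subrepresentation structure from (a) upgrades this to a $T$-equivariant open immersion, establishing faithfulness.

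The main obstacle I anticipate is the reverse direction of (a): passing from an abstract $\mathfrak{t}$-subrepresentation inclusion to a genuine $T$-equivariant open immersion of a homogeneous space $T/H$ with $H_{\overline{0}} = 1$. Producing $H$ as a closed subsupergroup and proving that $T/H \to X$ is an open immersion, rather than merely a dominant equivariant morphism, is the delicate step; the GFRR hypothesis on $X$ should be essential here to rule out pathological images and to control the behaviour of the odd part, while on the classical level one uses that $T_{\overline{0}}$ already embeds as the open orbit in the classical toric variety $X_{\overline{0}}$.
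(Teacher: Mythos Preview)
The paper does not prove this lemma at all: it is quoted verbatim as \cite[Lemma~4.7]{Jankowski2025} and used as a black box in the subsequent argument (Lemma~\ref{prop-monomial-toric-super} and Corollary~\ref{Kus-Fourier-Toric}). There is therefore no proof in the present paper against which to compare your proposal.

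That said, your sketch is a reasonable outline of how one would expect the proof in \cite{Jankowski2025} to go, and you have correctly identified the delicate point: in the reverse direction of (a), upgrading a dominant $T$-equivariant morphism $T\to X$ to an \emph{open immersion} of $T/H$ with $H_{\overline 0}=1$ is where the GFRR hypothesis must do real work, and your sketch does not actually carry this out. A second point worth flagging is your claim in (b) that the complement $X_{\overline 0}\setminus T_{\overline 0}$ is automatically a \emph{principal} divisor $V(f)$; an open affine in an affine scheme need not be principal in general, so one has to use the $T_{\overline 0}$-action (e.g.\ pick a character in the interior of the weight semigroup) to produce such an $f$, and this deserves a sentence of justification. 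If you want to fill in the details, consult Jankowski's paper directly rather than the present one.
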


Before we go ahead and verify the algebras arising from \cite{Kus-Fourier}, we consider a preparatory Lemma.

\begin{lemma}\label{prop-monomial-toric-super}
    Let $K\subseteq\{0,1\}^{q}\times\N^n$ be finite. Denote the set of sums of finitely many elements of $K$ with result in $\{0,1\}^{q}\times\N^n$ by $\langle K\rangle$. Assume that for all $(I,\mathbf{m})\in K$ we also have $(I-e_{i},\mathbf{m})\in K$ for all $1\leq i\leq q$ with $I_i=1$ and that we have
    \begin{equation*}
        \C[x^{\pm\mathbf{m}}v^{\pm}|(0,\mathbf{m})\in K]=\C[x_1^{\pm},\dots,x_{n}^{\pm},v^{\pm}].
    \end{equation*}
    Assume further that for all $1\leq i\leq q$ there exists $(e_i,\mathbf{m}^{i})\in\langle K\rangle$.
    Then, the superscheme $\mathrm{Spec}\, A$ with  
    \begin{equation*}
        A:=\C[\xi^{I}x^{\mathbf{m}}v\,|\,(I,\mathbf{m})\in K]
    \end{equation*}
    is a (not necessarily normal) toric supervariety. We can choose parameters $c_{ij}\in\C^{n}$ for the torus action subject to the conditions
    \begin{equation}\label{eq-prop-monomial-super-torus-action}
        \langle m,c_{ij}\rangle=0\text{ for all }(I,\mathbf{m})\in K\text{ with }(I+e_j,\mathbf{m})\notin\langle K\rangle\text{ and } I_j=0
    \end{equation}
    where $\langle\,,\rangle$ denotes the standard bilinear product.
\end{lemma}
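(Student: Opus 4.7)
The plan is to apply Jankowski's criterion \cite[Lemma~4.7(a)]{Jankowski2025}, which reduces the claim to producing a supertorus $T$ such that $A$ embeds into $\C[T]$ as a finitely-generated sub-superalgebra that is stable under the $\mathfrak{t}$-action. I would take $T_0 := (\C^\times)^{n+1}$ with coordinates $x_1,\dots,x_n,v$ and odd part $\mathfrak{t}_{\overline{1}} := \C\{\theta_1,\dots,\theta_q\}$ with brackets $[\theta_i,\theta_j] := \sum_{k=1}^{n}(c_{ij})_k x_k \in \mathfrak{t}_{\overline{0}}$ prescribed by the parameters $c_{ij}\in\C^n$ subject to \eqref{eq-prop-monomial-super-torus-action} (the default $c_{ij}=0$ is always admissible; the super-Jacobi identity is automatic since in a supertorus $[\mathfrak{t}_{\overline{0}},\mathfrak{t}]=0$). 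The coordinate superring $\C[T]\cong\C[x_1^{\pm},\dots,x_n^{\pm},v^{\pm},\xi_1,\dots,\xi_q]$ then receives $A$ tautologically as the sub-superalgebra generated by the $|K|<\infty$ monomials $\xi^I x^{\mathbf{m}} v$. The remaining supervariety axioms for $\mathrm{Spec}\,A$ reduce to $\overline{A}$ being a domain, which follows from the hypothesis that inverting the even generators recovers the entire Laurent polynomial ring $\C[x_1^{\pm},\dots,x_n^{\pm},v^{\pm}]$.

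The heart of the argument is verifying that $A$ is a $\mathfrak{t}$-subrepresentation of $\C[T]$. Stability under $\mathfrak{t}_{\overline{0}}$ is immediate since every generator is a $T_0$-weight vector. For $\mathfrak{t}_{\overline{1}}$, I apply the formula \eqref{eq-deriv-odd-toric}: because $c_{ij}$ has no $v$-component, $\langle t\partial/\partial t,x_{ij}\rangle$ acts on $\xi^I x^{\mathbf{m}} v$ as multiplication by $\langle\mathbf{m},c_{ij}\rangle$, and I obtain
\begin{equation*}
    \theta_i\bigl(\xi^I x^{\mathbf{m}} v\bigr) \;=\; \sum_{\substack{j=1\\ I_j=0}}^{q}\varepsilon_j\,\langle\mathbf{m},c_{ij}\rangle\,\xi^{I+e_j}x^{\mathbf{m}}v \;-\; \delta_{I_i,1}\,\varepsilon'_i\,\xi^{I-e_i}x^{\mathbf{m}}v,
\end{equation*}
with signs $\varepsilon_j,\varepsilon'_i\in\{\pm 1\}$ determined by the chosen ordering of the $\xi$'s. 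The second term, when nonzero, is a generator of $A$ by the closure hypothesis $(I-e_i,\mathbf{m})\in K$. In each summand of the first term, either the monomial $\xi^{I+e_j}x^{\mathbf{m}}v$ already lies in $A$ (because $(I+e_j,\mathbf{m})\in\langle K\rangle$) or else the coefficient $\langle\mathbf{m},c_{ij}\rangle$ vanishes by \eqref{eq-prop-monomial-super-torus-action}. Extending by the graded Leibniz rule gives $\theta_i(A)\subseteq A$.

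Combining both steps and invoking \cite[Lemma~4.7(a)]{Jankowski2025} exhibits $\mathrm{Spec}\,A$ as a (not necessarily normal) toric supervariety for a quotient of $T$. The main delicate point is the sign bookkeeping $\varepsilon_j,\varepsilon'_i$ in the odd action, which must be consistent with the ordering used to define $\xi^I$ so that the identity above really agrees with the operator-theoretic description in \eqref{eq-deriv-odd-toric}; once this is pinned down, the substance of the argument is purely combinatorial on the interplay between $K$ and the $c_{ij}$. A secondary technical check is the GFRR/reducedness property for $\mathrm{Spec}\,A$, but in this affine monomial setting this reduces to integrality of $\overline{A}$, which is immediate from the hypothesis on the even generators.
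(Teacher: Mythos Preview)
Your approach is essentially the paper's: embed $A$ into $\C[T]$, verify stability under the odd derivations via the same computation you wrote, and appeal to Jankowski's Lemma~4.7(a). The derivation check and the use of the closure hypothesis $(I-e_i,\mathbf{m})\in K$ match the paper exactly.

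The one place where you are too quick is the GFRR check. You assert that it ``reduces to integrality of $\overline{A}$,'' but Lemma~4.7(a) already \emph{assumes} $\mathrm{Spec}\,A$ is an affine supervariety, and the paper does nontrivial work to establish this. Two points in particular are not consequences of $\overline{A}$ being a domain: (i) the equality $J_A=\mathrm{Nil}(A)$, for which the paper uses precisely the hypothesis that each $(e_i,\mathbf{m}^i)\in\langle K\rangle$ to show that any nilpotent monomial $\xi^Ix^{\mathbf m}v$ with $|I|>0$ becomes, after multiplying by the even element $v^{|I|}x^{\mathbf m^{i_1}}\cdots x^{\mathbf m^{i_r}}$, an element of $(A_{\overline 1})$; and (ii) the condition $\overline{\mathrm{zdiv}(A)}=\mathrm{zdiv}(\overline A)$, which the paper handles by a leading-term argument (choosing a monomial order with $x_i>1>\xi_j$) showing that any $a\notin\mathrm{Nil}(A)$ is a non-zero-divisor. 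Neither step is automatic from integrality of the reduced quotient, and the first is where the assumption on the $(e_i,\mathbf m^i)$ actually enters the proof. You should fill these in rather than fold them into a one-line remark.
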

\begin{proof}
    Define
    \begin{equation*}
        B:=\C[x^{\mathbf{m}}v|(0,\mathbf{m})\in K]
    \end{equation*}
    We have that
    \begin{equation*}
        \mathrm{Nil}(A)=\langle\xi^Ix^\mathbf{m}v|(I,\mathbf{m})\in K, |I|>0\rangle.
    \end{equation*}
    Hence $A/\mathrm{Nil}(A)\cong B$ implying that $\mathrm{Spec}\, A$ is irreducible.
    
    We also have the inclusion $B\subseteq A$. 
    Applying the localization from $B$ on $A$ yields us the supertorus $\C[x_1^{\pm},\dots,x_{n}^{\pm},v,\xi_1,\dots,\xi_q]$ via $\xi_i=\xi_{i}x^{\mathbf{m^{i}}}x^{\mathbf{-m^{i}}}$. This yields us that $A$ is GFR as the torus is an open subscheme of $\mathrm{Spec}\, A$.\smallskip
    
    Further, applying the derivation in the $i$-th direction to $\xi^{I}x^{\mathbf{m}}$ results in
    \begin{equation*}
        -\xi^{I-e_{i}}x^{\mathbf{m}}+\sum\limits_{j=1}^{q}\langle m,c_{ij}\rangle\xi^{I+e_{j}}x^{\mathbf{m}}=-\xi^{I-e_{i}}x^{\mathbf{m}}+\sum\limits_{\substack{j=1\\I_{j}=0}}^{q}\langle m,c_{ij}\rangle\xi^{I+e_{j}}x^{\mathbf{m}}
    \end{equation*}
    If $I_i=0$, then we get
    \begin{equation*}
        \sum\limits_{\substack{j=1\\I_{j}=0}}^{q}\langle m,c_{ij}\rangle\xi^{I+e_{j}}x^{\mathbf{m}}
    \end{equation*}
    Now, in order for $A$ to be closed under this derivation, we need the vanishing of $\langle m,c_{ij}\rangle$ whenever $\xi^{I+e_{j}}x^{\mathbf{m}}\notin A$ which is exactly the case described in \eqref{eq-prop-monomial-super-torus-action}.\smallskip

    Finally, we want to show that $A$ is reduced. Let $(I,\mathbf{m}))\in K$ with $1\leq i_1<\cdots<i_r\leq q$ such that $I_{i_{s}}=1$ for $1\leq s\leq r$. Then we have
    \begin{equation*}
        \xi^Ix^\mathbf{m}vv^{|I|}x^{\mathbf{m}^{i_{1}}}\cdots x^{\mathbf{m}^{i_{r}}}\in(A_{\overline{1}})
    \end{equation*}
    In the next paragraph, we are going to show that each $b\in B$ is a non-zero-divisor. This yields us that $\mathrm{Nil}(A)=J_A$.
    \smallskip
    
    As $A/\mathrm{Nil}(A)$ is an integral domain, we need to show that each $a\not\in\mathrm{Nil}(A)$ is a non-zero divisor. An element $a\in A\setminus\mathrm{Nil}(A)$ must be a linear combination of monomials of which at least one is of the form $x^{\mathbf{m}}v^{r}$. Now, choosing a monomial order such that $x_i>1>\xi_j$ allows one to use a leading term argument, concluding that such an $a$ is indeed a non-zero divisor. Hence, we can lift non-zero divisors of $\overline{A}$ to non-zero divisors of $A$ and thus $A$ is GFRR.\smallskip
\end{proof}
\begin{corollary}\label{Kus-Fourier-Toric}
    Let $\mathfrak{g}$ be a Lie superalgebra of type I, $\mathfrak{osp}(1|2n)$ or one of the basic exceptionals. Let $\lambda\in\mathfrak{h}^{*}$ be a dominant integral weight that is typical such that $k\lambda$ is also typical \cite[Theorem 1]{Kac_Rep} for all $k\in\N$. Denote by $P(\lambda)$ the corresponding lattice polytope from \cite{Kus-Fourier} and $S(\lambda)$ its lattice points.
    Then the superscheme $\mathrm{Spec}\, A$ for
    \begin{equation*}
        A:=\C[\xi^{I}x^{\mathbf{m}}v|(I,\mathbf{m})\in S(\lambda)]
    \end{equation*}
    is a faithful toric supervariety.
\end{corollary}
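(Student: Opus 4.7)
The approach is to apply the preparatory Lemma \ref{prop-monomial-toric-super} with $K = S(\lambda)$ to obtain the toric supervariety structure, and then to upgrade to faithfulness via Lemma 4.7(b) of \cite{Jankowski2025}. The bulk of the work is therefore a verification of the three non-trivial hypotheses of Lemma \ref{prop-monomial-toric-super} using the explicit description of the Kus-Fourier polytopes.

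I would begin by recalling the construction of $P(\lambda)$ case-by-case from \cite{Kus-Fourier}: in each of the listed families the polytope is a Minkowski sum of FFLV-type building blocks in the $n$ even coordinates together with a $\{0,1\}$-valued region (typically a coordinate box or subcube) in the $q$ odd coordinates. Under the hypothesis that $k\lambda$ is typical for every $k$, the main theorem of \cite{Kus-Fourier} identifies $S(k\lambda)$ with the set of essential monomials for $K_\mathfrak{b}(k\lambda)$ and establishes $k \cdot P(\lambda) = P(k\lambda)$, so $\langle S(\lambda) \rangle = \bigsqcup_{k \geq 0} S(k\lambda)$. From this one reads off: (i) downward closure, $(I-e_i, \mathbf{m}) \in S(\lambda)$ whenever $I_i = 1$, directly from the box structure of the odd part; (ii) the identity $\C[x^{\pm\mathbf{m}} v^{\pm} \mid (0,\mathbf{m}) \in S(\lambda)] = \C[x_1^{\pm}, \dots, x_n^{\pm}, v^{\pm}]$, which is the classical FFLV generation statement applied to the even slice of $P(\lambda)$; and (iii) the existence, for each odd index $i$, of $(e_i, \mathbf{m}^i) \in \langle S(\lambda) \rangle$, which follows because typicality ensures every odd negative root space contributes non-trivially to some $K_\mathfrak{b}(k\lambda)$.

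For faithfulness I would exhibit, via Lemma 4.7(b) of \cite{Jankowski2025}, a non-nilpotent $T_{\overline{0}}$-weight vector $f \in A$ with $A[f^{-1}] \cong \C[T]$. The natural candidate is the product $f = \prod_{(0,\mathbf{m}) \in S(\lambda)} x^{\mathbf{m}} v$ of all even generators of $A$, which is itself a single even monomial, hence a weight vector; by hypothesis (ii) the localization inverts all even variables, and by hypothesis (iii) each $\xi_i$ is recovered as $(\xi^{e_i} x^{\mathbf{m}^i} v^{k}) \cdot (x^{\mathbf{m}^i} v^{k})^{-1} \in A[f^{-1}]$ for some $k$ with $(e_i, \mathbf{m}^i) \in S(k\lambda)$, giving $A[f^{-1}] \cong \C[x_1^{\pm}, \dots, x_n^{\pm}, v^{\pm}, \xi_1, \dots, \xi_q] = \C[T]$. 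The principal obstacle will be the uniform verification of hypothesis (ii) across types: for type I, typicality yields the clean Kac-module decomposition $K_\mathfrak{b}(\lambda) \cong \bigwedge \mathfrak{n}_{\overline{1}}^{-} \otimes L_{\mathfrak{g}_{\overline{0}}}(\lambda)$ and (ii) reduces to the classical FFLV result for $\mathfrak{g}_{\overline{0}}$, but for $\mathfrak{osp}(1|2n)$ and the basic exceptionals the relation between $P(\lambda)$ and its even slice is less transparent and requires a direct inspection of the polytope definitions in \cite{Kus-Fourier} to confirm that the even lattice points generate the ambient lattice.
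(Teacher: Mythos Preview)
Your approach is essentially the paper's intended one: the paper gives no explicit proof of this corollary, treating it as an immediate application of Lemma~\ref{prop-monomial-toric-super} once the hypotheses are read off from the polytope descriptions in \cite{Kus-Fourier}. Your outline of the case-by-case verification (downward closure in the odd directions, even-slice generation of the Laurent ring, existence of the $(e_i,\mathbf{m}^i)$) is exactly what such an application requires, and your identification of hypothesis~(ii) as the least transparent point outside of type~I is accurate.

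One remark worth making: you treat faithfulness as a separate upgrade via \cite[Lemma~4.7(b)]{Jankowski2025}, and this is formally necessary since the \emph{statement} of Lemma~\ref{prop-monomial-toric-super} only concludes ``toric supervariety'' without the word ``faithful''. However, the \emph{proof} of that lemma already establishes that the localization at the even generators recovers the full supertorus $\C[x_1^{\pm},\dots,x_n^{\pm},v^{\pm},\xi_1,\dots,\xi_q]$, which is precisely the criterion of \cite[Lemma~4.7(b)]{Jankowski2025}. So your faithfulness argument is correct but is in fact already embedded in the lemma's proof; the corollary's use of ``faithful'' is relying on that part of the argument rather than on a new step.
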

\begin{example}
Consider $\mathfrak{osp}(1|4)$ and the first fundamental weight $\lambda=\varpi_1$. 
Then the lattice points of the polytope using orthosymplectic Dyck path from \cite[Section 4.1]{Kus-Fourier} are the integer solutions to
\begin{equation*}
    s_{\alpha_1}+s_{\alpha_{1}+\alpha_2}+s_{\alpha_2}+s_{\delta_i}\leq 1,\,s_{\alpha_1}+s_{\alpha_{1}+\alpha_2}+s_{\alpha_{1}+\alpha_2+\alpha_1}+s_{\delta_1}\leq 1,\hspace{1em}i=1,2
\end{equation*}
with all variables being non-negative and $s_{\delta_i}\leq1$. The toric supervariety arising from this is the monomial superalgebra
\begin{equation*}
    \C[\xi_2v,\xi_2\xi_1v,\xi_2x_{\alpha_1+\alpha_2+\alpha_1}v,\xi_1v,v,x_{\alpha_{1}}v,x_{\alpha_{1}+\alpha_{2}}v,x_{\alpha_{2}}v,x_{\alpha_{1}+\alpha_{2}+\alpha_{1}}v,x_{\alpha_{2}}x_{\alpha_{1}+\alpha_{2}+\alpha_{1}}v]
\end{equation*}
\end{example}
\printbibliography
\end{document}